\documentclass[12pt]{article}
\usepackage{amsfonts,amsthm}
\usepackage{amssymb,amsmath}
\usepackage{enumerate}
\usepackage{amsmath}
\usepackage{mathtools}

\usepackage[dvipsnames]{xcolor}
\usepackage{hyperref}
\hypersetup{
    colorlinks=true,
    citecolor=blue,
    linkcolor=blue,
    filecolor=magenta,
    urlcolor=blue,
    pdftitle={Overleaf Example},
    pdfpagemode=FullScreen,
    }

\input{amssym.def}

\newtheorem{Definition}{Definition}[section]
\newtheorem{Theorem}[Definition]{Theorem}
\newtheorem{Lemma}[Definition]{Lemma}
\newtheorem{Proposition}[Definition]{Proposition}
\newtheorem{Corollary}[Definition]{Corollary}
\newtheorem{Example}[Definition]{Example}
\newtheorem{Remark}[Definition]{Remark}

\newcommand{\be}{\begin{equation}}
\newcommand{\ee}{\end{equation}}

\begin{document}

\title{\bf Graded $m$-nil clean ring}
\author{\bf Saikat Das \footnote {e-mail : saikatofficial607@gmail.com} \ and ~\bf Sukhendu Kar \footnote {e-mail : karsukhendu@yahoo.co.in} \\
{\small Department of Mathematics, Jadavpur University}\\
{\small 188, Raja S. C. Mallick Road, Kolkata -- 700032, India.}}
\date{}
\maketitle

\begin{abstract}
In this paper, we introduce the concept of graded $m$-nil clean ring to extend the existing notion of graded nil-clean ring introduced in $\cite{i}$. We explore fundamental properties of these rings, emphasizing the interplay between the identity component and the graded structure. We investigate certain conditions under which graded group rings, graded matrix rings and graded amalgamated algebras inherit the $m$-nil clean property from their components. Specifically, we establish sufficient conditions for graded group rings and graded matrix rings over commutative $m$-nil clean rings to be graded $m$-nil clean rings.  \end{abstract}

\noindent
\textbf{Keywords:} Graded rings, Graded amalgamted algebra, Group rings, Graded nil clean rings, Graded $m$-nil clean rings.

\noindent
\textbf {AMS Subject Classification :} $16U99$, $16Z05$, $16S34$.

\section{Introduction} \label{intro}

In 1977, the concept of clean ring was introduced by Nicholson \cite{n1}, where every element of the ring can be expressed as the sum of an idempotent and a unit. Since then, this idea has inspired several refinements and generalizations in multiple directions by many authors. Among them, the notions of \emph{nil-clean ring} \cite{d} and \emph{nil-good ring} \cite{d0} have attracted significant attention. In a nil-clean ring, each element decomposes as the sum of an idempotent and a nilpotent, while in a nil-good ring every element is either a nilpotent or the sum of a unit and a nilpotent. These classes of rings are closely related to the study of radicals, $\pi$-regularity of rings and K$\ddot{o}$the conjecture, a classical problem of ring theory. A further generalization is provided by the class of \emph{$m$-nil clean rings} \cite{mn}, in which every element can be written as the sum of an $m$-potent element and a nilpotent element. This framework naturally includes the class of nil-clean rings as the special case by considering $m=2$.

In addition to these nongraded investigations, graded analogues of nil-clean rings have been developed to better understand how the decomposition properties interact with group actions, matrix constructions and extensions. In $\cite{i}$, Ili\'c-Georgijevi\'c and \c{S}ahinkaya introduced the concepts of \emph{graded nil-clean rings} and \emph{graded strongly nil-clean rings}, while in \cite{c0}, Choulli, Mouanis and Namrok provided the graded version of nil-good rings. These works demonstrated that the graded setting gives rise to new structural phenomena that are not directly comparable to the nongraded case.

Motivated by these developments, we introduce and study the class of \emph{graded $m$-nil clean rings}.
A homogeneous element of a $G$-graded ring is called graded $m$-nil clean (resp. \emph{graded strongly $m$-nil clean}) if it can be expressed as the sum of a homogeneous $m$-potent and a homogeneous nilpotent (commuting in the strongly case). Thus our definition extends graded nil-clean rings (for the case $m=2$) and situates them within a broader family that parallels graded nil-good rings.

In this paper, we establish the fundamental properties of graded $m$-nil clean ring $R$, with particular emphasis on the connection between the identity component $R_e$ and the entire graded structure. We show that although $R_e$ being $m$-nil clean is necessary, it is not sufficient for $R$ to be graded $m$-nil clean and we provide sufficient conditions under which this implication holds. Furthermore, we investigate the lifting of $m$-potent elements modulo nil ideals and demonstrate that the graded $m$-nil clean property is preserved under quotients and certain extensions. We also introduce the class of \emph{graded strongly $m$-nil clean rings}, establish their relationship with strongly $\pi$-regular elements and provide characterizations in terms of strongly $\pi$-regular decompositions. Finally, we apply our results to amalgamated algebras, graded group rings and graded matrix rings. In particular, we establish certain necessary and sufficient conditions under which a group ring $RG$ inherits the graded $m$-nil clean property from $R$ and we also determine when graded matrix rings and graded amalgamated algebras are graded $m$-nil clean.

\section{Preliminaries and Prerequisites}
Let $R$ be a ring and $G$ a group with identity element $e$. Suppose there exists a family of additive subgroups $\{R_g\}_{g \in G}$ such that $R=\displaystyle \bigoplus_{g \in G} R_g$ and $R_gR_h \subseteq R_{gh}$ for all $g,h \in G$. Then $R$ is called a
\emph{$G$-graded ring}. The set $H=\displaystyle \bigcup_{g \in G} R_g$ is known as the \emph{homogeneous part} of $R$ and its members are called \emph{homogeneous elements}. Each subgroup $R_g$ is called a \emph{component}(or \emph{graded component}). If $a \in R_g$, we say that $a$ is homogeneous element of \emph{degree} $g$. The \emph{support of the graded ring} $R$ is defined as $\operatorname{supp}(R) = \{\, \sigma \in G \mid R_{\sigma} \neq 0 \,\}$. If $\operatorname{supp}(R)$ is finite, we write $\operatorname{supp}(R) < \infty$. In this case, $R$ is called a $G$-graded ring of finite support.

Given two $G$-graded rings $R$ and $S$, a ring homomorphism $f: R \longrightarrow S$ is said to be graded-homomorphism if $f(R_g) \subseteq f(S_g)$ for every $g \in G$. A right (resp. left, two-sided) ideal $I$ of a graded ring $R=\displaystyle \bigoplus_{g \in G} R_g$ is said to be \emph{homogeneous} (or \emph{graded}) whenever it decomposes as $I=\displaystyle \bigoplus_{g \in G} (I \cap R_g)$. When $I$ is a two-sided homogeneous ideal, the quotient ring $R/I$ inherits a natural $G$-grading given by $(R/I)_g=R_g/(I \cap R_g)$ as it's component. A homogeneous ideal $I$ is said to be \emph{graded-nil} provided that each homogeneous element of $I$ is nilpotent. A homogeneous right ideal $M$ of a graded ring $R$ is called a \emph{graded-maximal right ideal} if there is no proper homogeneous right ideal of $R$ properly containing $M$. A graded ring $R$ is termed \emph{graded-local} when it possesses a unique graded-maximal right ideal. The \emph{graded Jacobson radical} of a $G$-graded ring $R$, denoted by $J^g(R)$, is defined as the intersection of all graded-maximal right ideals of $R$. It is known that $J^g(R)$ forms a homogeneous two-sided ideal of $R$ (see \cite{n2}).

Let $R=\displaystyle \bigoplus_{g \in G} R_g$ be a $G$-graded ring. As noted in \cite{n1}, the associated group ring $RG$ admits a canonical $G$-grading. For each $g \in G$, the homogeneous component of degree $g$ is  $(RG)_g= \displaystyle \sum_{h \in G}~R_{gh^{-1}}h$ and with the multiplication defined by $(r_gg^{'})(r_hh^{'})=r_gr_h(h^{-1}g^{'}hh^{'})$, where $h,h^{'},g,g^{'} \in G$, $r_g \in R_g$ and $r_h \in R_h$.

Let $R$ be a $G$-graded ring and $n \in \mathbb{N}$. The matrix ring $M_n(R)$ admits a natural $G$-grading, defined with respect to a choice of $\overline{\sigma}=(g_1,\ldots,g_n) \in G^n$. For each $\lambda \in G$, consider the subset $$M_n(R)_{\lambda}(\overline{\sigma})= \{ (a_{ij}) \in M_n(R)~:~ a_{ij} \in {R_{g_i \lambda g^{-1}_j}}~ \text{for} ~ 1 \leq i,j  \leq n \}.$$

Then $M_n(R)=\displaystyle \bigoplus_{\lambda \in G} M_n(R)_{\lambda}(\overline{\sigma})$ is a $G$-graded ring with respect to usual matrix addition and multiplication. This ring is denoted by $M_n(R)_{\lambda}(\overline{\sigma})$.

In particular, if $\overline{\sigma}=(e,e,\ldots,e) \in G^n$, then  $M_n(R)_{\lambda}(\overline{\sigma})= \begin{pmatrix} R_\lambda & R_\lambda & \ldots & R_\lambda \\ R_\lambda & R_\lambda & \ldots & R_\lambda \\ \vdots & \vdots & \ldots & \vdots  \\R_\lambda & R_\lambda & \ldots & R_\lambda \end{pmatrix}$

\section{Graded $m$-nil clean ring}
This section is devoted to establish fundamental properties of graded $m$-nil clean rings which will be required in the subsequent development of this paper. We illustrate, by means of an example, that the class of graded nil clean rings is strictly contained within the class of graded $m$-nil clean rings. Furthermore, we show that the $m$-nil clean property of the identity component $R_e$ does not, in general, guarantee that the entire graded ring $R$ is $m$-nil clean.

\begin{Definition}
A homogeneous element $x$ of a $G$-graded ring $R$ is called \emph{graded $m$-nil clean} if $x=f+n$, where  $f$ is a homogeneous $m$-potent and $n$ is a homogeneous nilpotent in $R$. If, in addition, the condition $fn = nf$ holds, then $x$ is called a \emph{graded strongly $m$-nil clean} element. The ring $R$ itself is said to be graded $m$-nil clean (respectively, graded strongly $m$-nil clean) if every homogeneous element of $R$ is graded $m$-nil clean (respectively, graded strongly $m$-nil clean).\end{Definition}

\begin{Remark}
Let $\mathbb{F}$ be a finite field with $\mathbb{F} \neq \mathbb{Z}_2$ and $|\mathbb{F}| = m$. Suppose that $G = \{e, g\}$ be a cyclic group of order $2$.
Consider the ring 
$R =
\begin{pmatrix}
\mathbb{F} & \mathbb{F} \\
0 & \mathbb{F}
\end{pmatrix}$, equipped with the $G$-grading given by
\[
R_e =
\begin{pmatrix}
\mathbb{F} & 0 \\
0 & \mathbb{F}
\end{pmatrix}
\quad \text{and} \quad
R_g =
\begin{pmatrix}
0 & \mathbb{F} \\
0 & 0
\end{pmatrix}.
\]
Then $R$ is a graded $m$-nil clean ring but $R$ is not a graded nil clean ring.

Since $\mathbb{F}$ is a finite field with $|\mathbb{F}| = m$, every element of $\mathbb{F}$ is $m$-potent. Indeed, if $a = 0$, then clearly $a^{m} = 0 = a$. If $a \neq 0$, then $a \in \mathbb{F}^{\times}$, and the multiplicative group $\mathbb{F}^{\times}$ has order $m-1$. Hence $a^{m-1} = 1$, which implies $a^{m} = a$. Therefore every element of $\mathbb{F}$ satisfies $a^{m} = a$, and thus $\mathbb{F}$ is an $m$-nil clean ring. Since the homogeneous component $R_e$ being isomorphic to a direct product of two copies of $\mathbb{F}$ is also $m$-nil clean. Furthermore, every element of $R_g$ is nilpotent. Thus, by the definition of a graded $m$-nil clean ring, it follows that $R$ is a graded $m$-nil clean ring.

We now show that $R$ fails to be graded nil clean. Since $\mathbb{F} \neq \mathbb{Z}_2$, the field $\mathbb{F}$ is not nil clean. Indeed, in a field the only nilpotent element is $0$ and the only idempotents are $0$ and $1$; hence a field is nil clean if and only if it is $\mathbb{Z}_2$. Let $a \in \mathbb{F}$ with $a \neq 0,1$. Then $a^m = a$, so $a$ is $m$-potent, but it is not idempotent. Consequently, the homogeneous element
\[
\begin{pmatrix}
a & 0 \\
0 & a
\end{pmatrix}
\in R_e
\]
is not nil clean.
Thus $R_e$ itself is not nil clean and hence by \cite[Remark 3.2]{i}, the ring $R$ is not graded nil clean.
\end{Remark}

\begin{Remark} \label{2.2}
Let $R=\displaystyle \bigoplus_{g \in G} R_g$ be a $G$-graded ring and assume that $R$ be a graded $m$-nil clean ring. Let $x$ be a nonzero homogeneous element in $R_g$. By definition, $x$ can be expressed as the sum of a homogeneous $m$-potent $f$ and a nilpotent element $n$, i.e. $x=f+n$. We claim that both $f$ and $n$ necessarily belong to the same homogeneous component $R_g$ as $x$. To see this, suppose that $f \in R_h$ for some $h \neq g$. The uniqueness of decomposition then forces $f=0$ and hence $x=n$, showing that $n \in R_g$. On the other hand, if $h=g$, then it immediately follows that $n=x-f \in R_g$. Therefore, in either case, the components $f$ and $n$ are contained within the same graded component as $x$. \end{Remark}

\begin{Proposition}\label{prop3.4}

$(i)$ Let $R=\displaystyle \bigoplus_{g \in G} R_g$ be a $G$-graded (strongly) $m$-nil clean ring. Then every homomorphic image of $R$ is a graded (strongly) $m$-nil clean ring.

$(ii)$ Let $\{R_i\}_{i \in I}$ be a collection of $G$-graded rings for $I=\{1,2,\ldots,k \}$. Then $R=\displaystyle \prod_{i\in I} R_i$ is a graded (strongly) $m$-nil clean ring if and only if each $R_i$ is a graded (strongly) $m$-nil clean ring for $i \in I$.
\end{Proposition}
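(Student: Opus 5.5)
For part $(i)$, I will read ``homomorphic image'' as the image of a graded homomorphism, equivalently a quotient $R/I$ by a homogeneous two-sided ideal $I$ with the grading $(R/I)_g = R_g/(I\cap R_g)$ from the preliminaries. So let $\varphi : R \to S$ be a surjective graded homomorphism, with $S=\bigoplus_{g}S_g$ and $\varphi(R_g)=S_g$.

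Take a homogeneous $y \in S_g$ and pick $x \in R_g$ with $\varphi(x)=y$. Since $R$ is graded $m$-nil clean, write $x=f+n$ with $f$ a homogeneous $m$-potent and $n$ a homogeneous nilpotent. By Remark \ref{2.2} we may take $f,n\in R_g$. Then $y=\varphi(f)+\varphi(n)$, and each term is homogeneous of degree $g$. We also have $\varphi(f)^m=\varphi(f^m)=\varphi(f)$, and $\varphi(n)^k=\varphi(n^k)=0$ for suitable $k$. In the strongly case, $fn=nf$ gives $\varphi(f)\varphi(n)=\varphi(n)\varphi(f)$. Hence $S$ is graded (strongly) $m$-nil clean.

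The only point needing care is the surjectivity on homogeneous components, $\varphi(R_g)=S_g$. This holds for the quotient grading: a homogeneous element of $R/I$ of degree $g$ is $r+I$ with $r\in R_g$. I expect this to be the main point to state precisely, since a general surjective ring map need not respect the grading.

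For part $(ii)$, grade $R=\prod_{i\in I}R_i$ by $R_g=\prod_{i} (R_i)_g$. This is a direct sum decomposition because $I$ is finite, and it satisfies $R_gR_h\subseteq R_{gh}$ componentwise.

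\emph{Necessity.} Each $R_i$ is the image of $R$ under the projection $\pi_i$. This projection is a surjective graded homomorphism with $\pi_i(R_g)=(R_i)_g$, so part $(i)$ applies directly.

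\emph{Sufficiency.} A homogeneous $x=(x_1,\dots,x_k)\in R_g$ has each $x_i\in (R_i)_g$. Decompose each coordinate as $x_i=f_i+n_i$ with $f_i,n_i\in (R_i)_g$, again using Remark \ref{2.2} to place both in degree $g$. Set $f=(f_i)$ and $n=(n_i)$, both of which lie in $R_g$. Then $f^m=f$ holds coordinatewise. Also $n^{N}=0$ for $N=\max_i N_i$, where $n_i^{N_i}=0$; this is where finiteness of $I$ is used. Commutativity in the strongly case is coordinatewise as well. Thus $x=f+n$ is the required decomposition.

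The essential subtlety throughout is keeping $f$ and $n$ in the same degree as $x$, which Remark \ref{2.2} supplies. The only place finiteness matters is the bounded nilpotency index in the sufficiency direction.
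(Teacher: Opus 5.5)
The paper states this proposition without proof, treating it as routine, so there is no argument of the authors' to compare against. Your verification is correct and is the standard one.

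Three small remarks:

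\begin{enumerate}
\item In part $(i)$ the appeal to Remark~\ref{2.2} is unnecessary. A graded homomorphism sends $R_h$ into $S_h$, so $\varphi(f)$ and $\varphi(n)$ are homogeneous whatever their degrees.

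\item In part $(ii)$, by contrast, Remark~\ref{2.2} is essential, because $(f_1,\dots,f_k)$ is homogeneous only if all $f_i$ lie in the same degree. When some $x_i=0$, Remark~\ref{2.2} as stated does not apply, but you simply take $f_i=n_i=0$.

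\item The surjectivity $\varphi(R_g)=S_g$ that you single out holds for any surjective graded homomorphism, not only for quotient maps. If $y\in S_g$ and $y=\varphi(x)$ with $x=\sum_h x_h$, then uniqueness of homogeneous components in $S$ forces $y=\varphi(x_g)$.
\end{enumerate}

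You also correctly identify both places where finiteness of $I$ is used. It makes $\prod_i R_i=\bigoplus_g \prod_i (R_i)_g$ a direct sum decomposition, and it bounds the nilpotency index of $(n_1,\dots,n_k)$.
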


\begin{Definition}
A group $G$ is called $m$-torsion free if for any $g \in G$, $g^m=e$ then $g=e$, where $e$ denotes the identity element of $G$.\end{Definition}

\begin{Example}
$(i)$. Let $G$ be a finite group and let $m$ be a prime number. If $m \nmid |G|$, then $G$ is $m$-torsion free.

$(ii)$. The additive abelian group $\mathbb{Z}$ is $m$-torsion free for any positive integer $m \geq 2$.\end{Example}

\begin{Lemma}\label{2.4}
Let $R=\displaystyle \bigoplus_{g \in G} R_g$ be a $G$-graded ring and $f \in R$ be a non zero $m$-potent (i.e $f^m=f$) such that $f \in R_h$ for some $h \in G$. Then $h^{m-1}=e$, where $e$ is the identity element of $G$.\end{Lemma}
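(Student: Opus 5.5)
The argument is a direct degree comparison. Let $f \in R_h$ be nonzero with $f^m = f$. Since $R_gR_k \subseteq R_{gk}$, induction on $k$ gives $f^k \in R_{h^k}$ for every $k \geq 1$; in particular $f^m \in R_{h^m}$.

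Now $f^m = f$ is a nonzero element lying in both $R_{h^m}$ and $R_h$. Because $R = \bigoplus_{g \in G} R_g$ is a direct sum, distinct components intersect trivially: $R_{g} \cap R_{g'} = 0$ whenever $g \neq g'$. Hence $f \neq 0$ forces $h^m = h$. Multiplying on the right by $h^{-1}$ in $G$ gives $h^{m-1} = e$.

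Two minor points need attention. The first is the edge case $m = 1$, where the conclusion $h^0 = e$ is trivial, so we may assume $m \geq 2$. The second is the justification of $R_g \cap R_{g'} = 0$ for $g \ne g'$, which is immediate from the uniqueness of the homogeneous decomposition: an element of both components has two decompositions, one supported at $g$ and one at $g'$, and these can only agree if the element is $0$. The nonzero hypothesis on $f$ is exactly what is used at this step. There is no real obstacle here.
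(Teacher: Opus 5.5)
Your proof is correct and follows the paper's argument exactly: $f = f^m \in R_{h^m}$, so the nonzero $f$ lies in $R_h \cap R_{h^m}$, which forces $h^m = h$ and hence $h^{m-1} = e$. Your extra remarks (the trivial case $m=1$ and why distinct components intersect in $0$) simply spell out details the paper leaves implicit.
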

\begin{proof}
Since $R$ is a $G$-graded ring and $f \in R_h$, it follows that $f=f^m=f \ldots f \in R_h\ldots  R_h \subseteq R_{h^m}$. Therefore, $0 \neq f \in R_h \cap R_{h^m}$. This implies that $h^m=h$. Hence $h^{m-1}=e$.\end{proof}

\begin{Corollary}\label{2.41}
Let $R=\displaystyle \bigoplus_{g \in G} R_g$ be a $G$-graded ring and $G$ be a $(m-1)$-torsion free group. Then every nonzero homogeneous $m$-potent element of $R$ lies in the degree-zero component $R_e$.\end{Corollary}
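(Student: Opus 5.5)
The corollary follows directly from Lemma \ref{2.4} combined with the definition of an $(m-1)$-torsion free group.

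Let $f$ be a nonzero homogeneous $m$-potent element of $R$. Since $f$ is homogeneous, there is some $h \in G$ with $f \in R_h$.

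\begin{enumerate}[(1)]
\item Lemma \ref{2.4} applies because $f \neq 0$, $f^m = f$ and $f \in R_h$. It gives $h^{m-1} = e$. The underlying reason is that $f = f^m$ lies in both $R_h$ and $R_{h^m}$, and the sum $R = \bigoplus_{g} R_g$ is direct.
\item By hypothesis $G$ is $(m-1)$-torsion free, meaning that $g^{m-1} = e$ forces $g = e$ for every $g \in G$. Applying this to $g = h$ gives $h = e$.
\item Therefore $f \in R_e$, which is the claim.
\end{enumerate}

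There is no genuine obstacle here; the only care needed is in how the hypotheses are read.

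\begin{itemize}
\item The hypothesis $f \neq 0$ is essential. The zero element lies in every component, and Lemma \ref{2.4} uses nonvanishing to conclude $h^m = h$ from $R_h \cap R_{h^m} \neq 0$. This is why the statement is restricted to nonzero elements.
\item Implicitly $m \geq 2$, so that $m-1 \geq 1$ and the torsion-free hypothesis is meaningful.
\item In the degenerate case $m = 2$, the condition $h^{1} = e$ already gives $h = e$, so the torsion-free hypothesis is automatic. This recovers the familiar fact that nonzero homogeneous idempotents lie in $R_e$.
\end{itemize}
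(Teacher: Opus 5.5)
Your proof is correct and is exactly the paper's argument: Lemma~\ref{2.4} gives $h^{m-1}=e$ for the degree $h$ of a nonzero homogeneous $m$-potent, and $(m-1)$-torsion freeness of $G$ then forces $h=e$. Your side remarks, that $f\neq 0$ is needed and that the case $m=2$ is automatic, are also accurate.
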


\begin{proof}
Let $f \in R_g$ be a non zero $m$-potent element. Then by Lemma \ref{2.4}, it follows that $g^{m-1}=e$. This implies that $g=e$ as $G$ is $~(m-1)$-torsion free. So $f \in R_e$.\end{proof}

\begin{Example}
Let $m$ be a prime number with $m \geq 3$. Consider the ring $R=M_2(\mathbb{Z}_m) $, which is graded by the group $\mathbb{Z}_2$ with the following grading $R_0=\begin{pmatrix} \mathbb{Z}_m & 0 \\ 0 & \mathbb{Z}_m \end{pmatrix}$ and $R_1=\begin{pmatrix} 0 & \mathbb{Z}_m \\ \mathbb{Z}_m & 0 \end{pmatrix}$. Since $m$ is odd, $\mathbb{Z}_2$ is not $(m-1)$-torsion free. Consider the matrix $\begin{pmatrix} 0 & 2 \\ 2 & 0  \end{pmatrix} \in R_1$, which is a homogeneous element satisfying $ {\begin{pmatrix} 0 & 2 \\ 2 & 0  \end{pmatrix}}^m= \begin{pmatrix} 0 & 2^m \\ 2^m & 0  \end{pmatrix}= \begin{pmatrix} 0 & 2 \\ 2 & 0  \end{pmatrix}$,  is a nonzero $m$-potent. However, it does not belong to the degree-zero component $R_0$.
\end{Example}

\begin{Proposition} \label{2.5}
  Let $R=\displaystyle \bigoplus_{g \in G} R_g$ be a $G$-graded (strongly) $m$-nil clean ring. Then

  $(i)$ $R_e$ is an (strongly) $m$-nil clean ring.

  $(ii)$ If the group $G$ is $(m-1)$-torsion free, then every homogeneous element of $R_g$ with $g \neq e$ is nilpotent. \end{Proposition}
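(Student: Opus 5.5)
For part $(i)$ the plan is to use Remark \ref{2.2} to keep every decomposition inside $R_e$. Take $x \in R_e$. Since $R$ is graded (strongly) $m$-nil clean, write $x = f + n$ with $f$ a homogeneous $m$-potent and $n$ a homogeneous nilpotent, and with $fn = nf$ in the strong case.

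By Remark \ref{2.2}, both $f$ and $n$ lie in the component containing $x$, namely $R_e$. To make this independent of the remark's phrasing, I would compare homogeneous components directly. Suppose $f \in R_h$ with $h \neq e$. Comparing degree-$e$ and degree-$h$ components in $x = f + n$ shows that $n$ can only be homogeneous if $f = 0$, in which case $x = n$. If $x = 0$, take $f = n = 0$.

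The main point to watch is that $f^m = f$ and $n^k = 0$ are identities in $R$. Since $f, n \in R_e$ and $R_e$ is a subring with the same multiplication, these identities hold in $R_e$ as well. The same is true of the commutation relation $fn = nf$. Hence $x$ is (strongly) $m$-nil clean in $R_e$, and $R_e$ is a (strongly) $m$-nil clean ring.

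For part $(ii)$, let $x \in R_g$ with $g \neq e$, and we may assume $x \neq 0$. Write $x = f + n$ as before. If $f \neq 0$, then Corollary \ref{2.41}, which uses that $G$ is $(m-1)$-torsion free, places $f$ in $R_e$. Since $x$ and $n$ are homogeneous and $x \neq 0$ has degree $g \neq e$, we compare components in $x = f + n$:
\begin{itemize}
\item if $n \in R_g$, the degree-$e$ component gives $f = 0$;
\item if $n$ has some other degree, then the nonzero degree-$g$ component $x$ of the left side cannot be matched on the right.
\end{itemize}
Both cases contradict $f \neq 0$ or $x \neq 0$. Hence $f = 0$, and $x = n$ is nilpotent.

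The only real obstacle is the careful degree bookkeeping in the component comparison, which is handled by the uniqueness of the direct sum decomposition $R = \bigoplus_{g \in G} R_g$.
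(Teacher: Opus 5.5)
Your proof is correct and follows the paper's argument. In part (i) you use Remark \ref{2.2} to place $f$ and $n$ in $R_e$. In part (ii) you combine degree comparison with Corollary \ref{2.41} to force $f=0$; you apply the corollary before the component comparison rather than after, which changes nothing. Your separate handling of $x=0$ and of $fn=nf$ in the strong case only supplies details the paper leaves implicit.
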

\begin{proof}
$(i)$ We know that $R_e$ is a subring of the graded ring $R$. Since $R$ is a graded $m$-nil clean ring, for any $x \in R_e$, $x=f+n$ for some homogeneous $m$-potent $f$ and homogeneous nilpotent $n$. Then by Remark \ref{2.2}, it follows that $f, n \in R_e$. Hence $R_e$ is an $m$-nil clean ring.

$(ii)$ Let $r \in R_g$ with $g \neq e$. Since $R$ is a graded $m$-nil clean ring, $r$ can be written as $r = f + n$, where $f$ is a homogeneous $m$-potent and $n$ is a homogeneous nilpotent. Then from Remark \ref{2.2}, it follows that $f, n \in R_g$. Thus by applying Corollary \ref{2.41}, we conclude that $f = 0$. Hence $r$ is nilpotent.  \end{proof}

\begin{Remark}\label{r1}
 If $R=\displaystyle \bigoplus_{g \in G} R_g$ is a $G$-graded $m$-nil clean ring, then $R_e$ is also $m$-nil clean follows from Proposition \ref{2.5}. This yields to the question of when the following implication holds true :
    $$ R_e ~\text{is a $m$-nil clean ring} \implies R=\bigoplus_{g \in G} R_g ~\text{is a graded $m$-nil clean ring.}$$

The following example shows that the above implication does not hold, in general. \end{Remark}
\begin{Example}
  Consider $R=A[X]$, where $A$ is a $m$-nil clean ring and $R$ is a $\mathbb{Z}$-graded with $R_i=AX^i$ if $i \geq 0$ and $R_i=0$ if $i < 0$. Then $R_0=A$ is $m$-nil clean and $\mathbb{Z}$ is $(m-1)$-torsion free group, but the homogeneous element $X \in R_1$ is not nilpotent. This contradicts Proposition \ref{2.5}. Therefore, $R$ is not a graded $m$-nil clean ring. Therefore $R_0$ is $m$-nil clean but $R$ is not graded $m$-nil clean. \end{Example}

Next, we give some sufficient conditions for the above implication to be true. For this purpose, we require the following results.

The following result follows from \cite[Theorem 8]{kd} :

\begin{Lemma}  \label{lp}
 Let $R$ be a ring with $~m-1 \in U(R)$, where $m\geq 2$ and $I$ be a nil ideal of $R$. Then $m$-potents lift modulo $I$ i.e for any $x \in R$ such that $x^m-x \in I$, there exists an $m$-potent $f=f^m \in R$ with $f-x \in I$. \end{Lemma}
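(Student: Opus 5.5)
Since the lemma is quoted from \cite[Theorem 8]{kd}, the plan is a self-contained lifting argument in the spirit of the classical idempotent-lifting proof. Fix $x\in R$ with $a:=x^m-x\in I$. Everything below happens inside the commutative subring $\mathbb{Z}[x]\subseteq R$, and the elements we build are integer polynomials in $x$. Because $I$ is nil, $a^k=0$ for some $k$. We look for $f=p(x)$ with $p\in\mathbb{Z}[t]$ (after adjoining $(m-1)^{-1}$) such that $f-x\in Ra$ and $f^m=f$. Note that $Ra\subseteq I$.

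\textbf{Newton iteration.} Put $\varphi(t)=t^m-t$, so $\varphi'(t)=mt^{m-1}-1$. Define $x_0=x$ and
\[
x_{j+1}=x_j-\varphi(x_j)\,\varphi'(x_j)^{-1}.
\]
This requires $\varphi'(x_j)$ to be invertible, which holds modulo the nil ideal generated by $a$. Indeed, modulo $I$ we have $x^m\equiv x$, so $x^{m-1}$ is an idempotent $e$ modulo $I$. Then
\[
\varphi'(x)\equiv me-1=(m-1)e-(1-e),
\]
and $(m-1)e-(1-e)$ has inverse $(m-1)^{-1}e-(1-e)$ modulo $I$. This is the only place where the hypothesis $m-1\in U(R)$ is used. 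Since units lift modulo a nil ideal ($u$ invertible mod $I$ with $uv-1$ nilpotent forces $u$ invertible), $\varphi'(x_j)$ is a unit of $R$ whenever $x_j\equiv x\pmod I$. Moreover $\varphi'(x_j)^{-1}$ lies in $\mathbb{Z}[(m-1)^{-1}][x]$, because it is a polynomial in $x_j$ and in the inverse of a nilpotent perturbation of a polynomial unit. So all elements stay in a commutative ring.

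\textbf{Quadratic convergence.} By Taylor expansion in the commutative ring, $\varphi(x_{j+1})\in \varphi(x_j)^2\,\mathbb{Z}[\ldots]$, so $\varphi(x_j)\in a^{2^j}R$. In addition $x_{j+1}-x_j\in \varphi(x_j)R\subseteq aR\subseteq I$. Choosing $j$ with $2^j\ge k$ gives $\varphi(x_j)=0$, so $f=x_j$ satisfies $f^m=f$ and $f-x\in I$.

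\textbf{Main obstacle.} The delicate step is invertibility of $\varphi'(x_j)$ and keeping all iterates inside a commutative subring. If $m-1$ is not a unit, $me-1$ can fail to be invertible (for instance $m=3$ in characteristic $2$), which is why the hypothesis is needed. I would handle this as above: work modulo $I$ with the idempotent $e=x^{m-1}$, then lift the inverse using nilpotence. Alternatively, one can cite \cite[Theorem 8]{kd} directly.
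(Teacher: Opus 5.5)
Your argument is correct. The paper gives no proof of its own for this lemma; it only quotes \cite[Theorem 8]{kd}. So your Newton (Hensel) iteration is a self-contained substitute rather than a variant of something in the text.

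The key input checks out. Modulo $t^m-t$ the element $t^{m-1}$ is idempotent, so $mt^{m-1}-1=(m-1)t^{m-1}-(1-t^{m-1})$ is invertible in $\mathbb{Z}[(m-1)^{-1}][t]/(t^m-t)$, with inverse $v(t)=(m-1)^{-1}t^{m-1}-(1-t^{m-1})$. In other words, $t^m-t$ becomes separable once $m-1$ is inverted, and this is exactly where the hypothesis $m-1\in U(R)$ enters. The integral Taylor expansion then gives $\varphi(x_j)\in a^{2^j}S$, where $S$ is the commutative subring generated by $x$ and $(m-1)^{-1}$.

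Two points of hygiene:
\begin{itemize}
\item Write $a^{2^j}S$ and $aS$ instead of $a^{2^j}R$ and $Ra$. The squaring step $(a^{2^j}s)^2=a^{2^{j+1}}s^2$ needs $s$ to commute with $a$.
\item Your unit-lifting remark ($uv-1$ nilpotent implies $u$ invertible) is valid only because $u$ and $v$ lie in the commutative ring $S$. In a general ring it gives just a right inverse.
\end{itemize}

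What your route buys over the citation is a sharper, element-wise statement. Both $\varphi'(x_j)v(x)-1$ and $x_j-x$ lie in $aS$, so you only use that the single element $x^m-x$ is nilpotent; nilness of $I$ is not needed beyond that. Moreover, the lift $f$ is a polynomial in $x$ with coefficients in $\mathbb{Z}[(m-1)^{-1}]$. Hence $f$ commutes with everything that commutes with $x$, which is what one wants for lifting strongly $m$-nil clean decompositions. Also, in a graded ring $f\in R_e$ automatically whenever $x\in R_e$, since $(m-1)^{-1}\in R_e$. That is precisely the situation in which the lemma is applied in Theorem~\ref{q}.
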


\begin{Theorem} \label{q}
 Let $G$ be a $(m-1)$-torsion free group, where $m\geq 2$ and $R=\displaystyle \bigoplus_{g \in G} R_g$ a G-graded ring such that $(m-1) \in U(R)$ and $I$ be a graded-nil ideal of $R$. Then $R$ is a graded $m$-nil clean ring if and only if so is $R/I$. \end{Theorem}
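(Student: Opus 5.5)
The forward direction will come from Proposition \ref{prop3.4}(i). The quotient map $\pi: R \to R/I$ is a surjective graded homomorphism, since $I$ is homogeneous, so $(R/I)_g = R_g/(I\cap R_g)$. Take a homogeneous $\bar x \in (R/I)_g$ and lift it to $x \in R_g$. Write $x = f + n$ with $f$ homogeneous $m$-potent and $n$ homogeneous nilpotent. Then $\bar x = \bar f + \bar n$, where $\bar f$ is a homogeneous $m$-potent and $\bar n$ is a homogeneous nilpotent. So this direction is routine.

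For the converse, assume $R/I$ is graded $m$-nil clean and fix a homogeneous $x \in R_g$. I split into two cases according to the degree.

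\emph{Case $g \neq e$.} Since $G$ is $(m-1)$-torsion free, Proposition \ref{2.5}(ii) applied to $R/I$ shows that $\bar x$ is nilpotent. Hence $x^k \in I$ for some $k$. Moreover $x^k$ is homogeneous (of degree $g^k$) and $I$ is graded-nil, so $x^k$ is nilpotent, and therefore $x$ is nilpotent. Thus $x = 0 + x$ is the required decomposition.

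\emph{Case $g = e$.} By Remark \ref{2.2} and Corollary \ref{2.41}, we can write $\bar x = \bar f + \bar n$ with $\bar f, \bar n \in (R/I)_e = R_e/(I\cap R_e)$, where $\bar f^m = \bar f$ and $\bar n$ is nilpotent. Choose a lift $y \in R_e$ of $\bar f$; then $y^m - y \in I \cap R_e$.

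The key step, and the main obstacle, is to lift $y$ to a genuine $m$-potent inside $R_e$, not just inside $R$. I would apply Lemma \ref{lp} to the ring $R_e$ with the ideal $I_e = I\cap R_e$.
\begin{itemize}
\item $I_e$ is a two-sided ideal of $R_e$.
\item $I_e$ is nil, because every element of $I_e$ is homogeneous and $I$ is graded-nil.
\item $m-1$ is a unit in $R_e$. Indeed, if $u(m-1)=1$, write $u=\sum_h u_h$ into homogeneous components. Comparing degree-$e$ components gives $u_e(m-1) = 1$, since $m-1 \in R_e$ is central.
\end{itemize}
Lemma \ref{lp} then gives $f \in R_e$ with $f^m = f$ and $f - y \in I_e$.

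Set $n = x - f \in R_e$. Its image is $\bar n$, which is nilpotent, so $n^k \in I \cap R_e$ for some $k$. As before, the graded-nil hypothesis makes $n^k$ nilpotent, so $n$ is nilpotent. Hence $x = f + n$ is a graded $m$-nil clean decomposition, which finishes the proof.
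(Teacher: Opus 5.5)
Your proposal is correct and follows the paper's proof essentially step for step. The forward direction goes through Proposition \ref{prop3.4}(i). The converse splits by degree: in degree $e$ you lift the $m$-potent via Lemma \ref{lp} applied to $R_e$ and the nil ideal $I\cap R_e$, and in degree $g\neq e$ you get nilpotency from Proposition \ref{2.5}(ii) together with the graded-nil hypothesis. You also justify two points the paper only asserts: that $m-1$ is a unit in $R_e$, and that $I\cap R_e$ is nil.
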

\begin{proof}
First suppose that $R$ is a graded $m$-nil clean ring. Then $R/I$ is also a graded $m$-nil clean by Proposition \ref{prop3.4} as it is a graded homomorphic image of $R$.

Conversely, let $R/I$ be a graded $m$-nil clean ring. We need to show that $R_e$ is an $m$-nil clean and for every $n \in R_g$, $g \neq e$ is nilpotent. Since $R/I$ is a graded $m$-nil clean ring, by Proposition $\ref{2.5}$, we find that $(R/I)_e$ is an $m$-nil clean ring. Let us assume that $r \in R_e$. Since $\overline{R}=R/I$ is a graded $m$-nil clean ring, so $\overline{r} \in (R/I)_e=R_e/(I \cap R_e)$ can be written as $\overline{r}=\overline{f}+\overline{n}$ for some $m$-potent $\overline{f} \in (R/I)_e$ and a nilpotent $n \in (R/I)_e$. Since $1 \in R_e$ and $R_e$ is a subring of $R$, it follows that $m-1 = (m-1)\cdot 1 \in R_e$. Also  $(m-1)$ is a unit in the ring $R_e$ and $I \cap R_e$ is a nil ideal of $R_e$. Thus by Lemma \ref{lp}, $\overline{f}$ can be lifted to an $m$-potent, say $f \in R_e$. On the other hand, $\overline{r-f}$ is nilpotent in $(R/I)_e$. Therefore, $r-f$ is a nilpotent element in $R_e$ and so $R_e$ is an $m$-nil clean ring. Now assume that $r \in R_g$, where $g \neq e$. Since $R/I$ is graded $m$-nil clean, it follows from Proposition \ref{2.5} that $\overline{r} \in (R/I)_g$ is nilpotent. Since $I$ is a graded-nil ideal of $R$, $r$ is a nilpotent element of $R$, which completes the proof. \end{proof}

\begin{Proposition}
 Let $R=\displaystyle \bigoplus_{g \in G} R_g$ be a $G$-graded $m$-nil clean ring. Then $J^g(R)$ is a graded-nil ideal of $R$. \end{Proposition}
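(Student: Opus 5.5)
We have to show that every homogeneous element of $J^g(R)$ is nilpotent. The plan is to handle the degree-$e$ part first, using the identity component, and then reduce an arbitrary homogeneous element to that case via Lemma \ref{2.4}.

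\textbf{Step 1: the identity component.} We use the standard fact (see \cite{n2}) that $J^g(R)\cap R_e=J(R_e)$. By Proposition \ref{2.5}, $R_e$ is an $m$-nil clean ring, so it suffices to show that $J(R_e)$ is nil. Take $a\in J(R_e)$ and write $a=f+n$ with $f^m=f$ and $n$ nilpotent in $R_e$. Pass to $\overline{R_e}=R_e/J(R_e)$. There $\overline{a}=0$, so $\overline{f}=-\overline{n}$ is both $m$-potent and nilpotent. An $m$-potent $u$ that is nilpotent is zero, because
\[
u=u^m=u^{(m-1)k+1}=u^{(m-1)k}u \quad \text{for all } k\ge 1,
\]
and the right-hand side vanishes for large $k$. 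Hence $f\in J(R_e)$. Now $f^{m-1}$ is an idempotent, since $(f^{m-1})^2=f^{m}f^{m-2}=f^{m-1}$, and it lies in $J(R_e)$. An idempotent in the Jacobson radical is $0$, so $f^{m-1}=0$ and therefore $f=f^m=f\cdot f^{m-1}=0$. Thus $a=n$ is nilpotent, and $J(R_e)$ is nil.

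\textbf{Step 2: arbitrary degree.} Let $0\neq x\in J^g(R)\cap R_g$. Write $x=f+n$ with $f$ a homogeneous $m$-potent and $n$ a homogeneous nilpotent. By Remark \ref{2.2}, both $f$ and $n$ lie in $R_g$. We split into two cases.

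\emph{Case $f=0$.} Then $x=n$ is nilpotent and we are done.

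\emph{Case $f\neq 0$.} By Lemma \ref{2.4}, $g^{m-1}=e$, so $g$ has finite order $k$. Then $x^k\in R_{g^k}=R_e$. Since $J^g(R)$ is a two-sided ideal, $x^k\in J^g(R)$, and hence $x^k\in J^g(R)\cap R_e=J(R_e)$. By Step 1, $x^k$ is nilpotent, so $x$ is nilpotent.

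In both cases every homogeneous element of $J^g(R)$ is nilpotent. Since $J^g(R)$ is a homogeneous two-sided ideal, it is therefore graded-nil.

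\textbf{Main obstacle.} The only external ingredient is the identification $J^g(R)\cap R_e=J(R_e)$ for the graded Jacobson radical; I would cite it from \cite{n2}. The rest of the argument is elementary.
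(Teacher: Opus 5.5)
Your proof is correct, and it is more careful than the paper's. On the identity component the two arguments agree. The paper cites \cite[Proposition 1.2]{mn} for the nilness of $J(R_e)$, which you reprove directly via the idempotent $f^{m-1}\in J(R_e)$. It cites \cite[Corollary 4.2]{c1} for the identification $J^g(R)\cap R_e=J(R_e)$.

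The difference is in degrees $g\neq e$. There the paper simply invokes Proposition~\ref{2.5}(ii) to declare every homogeneous element of $R_g$ nilpotent. That part of Proposition~\ref{2.5} requires $G$ to be $(m-1)$-torsion free, which is not assumed in this statement, and without it the claim is false. For example, take $m=3$ and $G=\{e,\tau\}$ of order $2$. The group ring $\mathbb{Z}_3G$, graded by $(\mathbb{Z}_3G)_h=\mathbb{Z}_3h$, is graded $3$-nil clean because every homogeneous element is $3$-potent, yet $\tau$ is a unit.

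Your Step 2 sidesteps this. Combining Remark~\ref{2.2} and Lemma~\ref{2.4}, either the $m$-potent part is zero, or the degree has finite order $k$ and $x^k\in J^g(R)\cap R_e=J(R_e)$. This proves the proposition for arbitrary $G$.

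So the paper's route is shorter but, as written, only establishes the $(m-1)$-torsion free case. That case is all the subsequent Corollary uses. Your route covers the statement in full, and it needs nothing beyond the same standard identification $J^g(R)\cap R_e=J(R_e)$.
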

\begin{proof}
    Suppose that $R$ is a graded $m$-nil clean ring. Then by Proposition \ref{2.5}, $R_e$ is an $m$-nil clean ring. Thus by \cite[Proposition 1.2]{mn}, we find that $J(R_e)$ is a nil ideal of $R$. Also by \cite[Corollary 4.2]{c1}, $J(R_e)=J^g(R) \cap R_e$. Let $a$ be a homogeneous element of $J^g(R)$. Now if $a \in R_e$ then $a \in J(R_e)$ and so $a$ is nilpotent. On the other hand, if $a \in R_g$, where $g \neq e$, then Proposition \ref{2.5}, every homogeneous element of $R_g$ is nilpotent. Hence $a$ is nilpotent in this case as well. Therefore, $J^g(R)$ is a graded nil ideal of $R$.   \end{proof}

\begin{Corollary}
Let $G$ be a $(m-1)$-torsion free group, where $m\geq 2$ and $R=\displaystyle \bigoplus_{g \in G} R_g$ a G-graded ring such that $(m-1) \in U(R)$. Then $R$ is a graded $m$-nil clean ring if and only if $R/J^g(R)$ is a graded $m$-nil clean and $J^g(R)$ is a graded nil ideal of $R$. \end{Corollary}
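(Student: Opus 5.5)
The statement should follow directly from the two results just proved, one for each direction. I would first check that $J^g(R)$ meets the hypotheses of Theorem \ref{q}. It is a homogeneous two-sided ideal of $R$ (as recalled in the preliminaries, see \cite{n2}). So $R/J^g(R)$ carries the natural $G$-grading $(R/J^g(R))_g = R_g/(J^g(R)\cap R_g)$, and the canonical projection $R\to R/J^g(R)$ is a graded homomorphism.

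For the forward direction, assume $R$ is graded $m$-nil clean. The preceding Proposition gives at once that $J^g(R)$ is a graded-nil ideal. Since $R/J^g(R)$ is a graded homomorphic image of $R$, Proposition \ref{prop3.4}$(i)$ shows that $R/J^g(R)$ is graded $m$-nil clean. This direction does not need the hypotheses on $G$ or on $m-1$.

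For the converse, assume $R/J^g(R)$ is graded $m$-nil clean and $J^g(R)$ is graded-nil. Now all the hypotheses of Theorem \ref{q} hold with $I = J^g(R)$: $G$ is $(m-1)$-torsion free, $m-1\in U(R)$, and $I$ is a graded-nil ideal. The theorem therefore says that $R$ is graded $m$-nil clean if and only if $R/I$ is, and the latter holds by assumption.

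The only point that needs care, and I expect it to be the only obstacle, is that Theorem \ref{q} really uses the homogeneity of $I$. Its proof reduces modulo $I\cap R_e$ inside $R_e$ and uses that homogeneous elements of $I$ are nilpotent. Both facts are guaranteed here because $J^g(R)$ is homogeneous and assumed graded-nil. I would state this explicitly, and otherwise the corollary follows by combining the two cited results.
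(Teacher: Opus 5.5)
Your proof is correct and is the argument the paper intends; the paper states this corollary without proof, as an immediate consequence of the preceding Proposition with Proposition \ref{prop3.4}$(i)$ for the forward direction, and of Theorem \ref{q} with $I=J^g(R)$ for the converse. Your aside that the forward direction needs no hypothesis on $G$ is in fact true, but the paper's own proof of that Proposition invokes Proposition \ref{2.5}$(ii)$ and hence $(m-1)$-torsion freeness; this is harmless here because that hypothesis is assumed in the corollary.
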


Now we are ready to establish some sufficient conditions under which the implication in Remark~\ref{r1} holds. To this end, we first recall the definition of a $PI$-ring \cite{k0}.

A ring $R$ is a $PI$-ring if there is for some natural integer $n$, a nonzero element $P$ of $\mathbb{Z}[X_1,X_2, \ldots,X_n]$ such that $P(r_1,r_2,\ldots,r_n)=0$ for all $(r_1,r_2,\ldots,r_n) \in R^n$.

\begin{Theorem}
Let $R=\displaystyle \bigoplus_{g \in G} R_g$ be a $G$-graded ring. Suppose that one of the following conditions is satisfied :

$(i)$. The group  $G$ is finite and $R_gR_{g^{-1}}=0$ for every $g (\neq e) \in G$.

$(ii)$.  The ring $R$ is a $PI$-ring that is graded-local and $G$ is finite $(m-1)$-torsion free group such that $(m-1) \in U(R)$ and the order of $G$ is a unit in $R$.

Then $R$ is a graded $m$-nil clean ring  if $R_e$ is an $m$-nil clean ring.  \end{Theorem}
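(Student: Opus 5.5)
The plan is to treat the two hypotheses separately. In each case I will reduce graded $m$-nil cleanness of $R$ to two facts. First, every element of $R_e$ is $m$-nil clean inside $R_e$; this is the hypothesis. Second, every homogeneous element of $R_g$ with $g\neq e$ is nilpotent, so that $x=0+x$ is a graded $m$-nil clean decomposition. A homogeneous $x\in R_e$ decomposes as $x=f+n$ with $f,n\in R_e$, and these are homogeneous in $R$. So everything comes down to proving that the non-identity components consist of nilpotents.

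\textbf{Case (i).} Let $x\in R_g$ with $g\neq e$, and let $k$ be the order of $g$; $k$ is finite because $G$ is finite. For $k\ge 2$ write
\[
x^k = x^{k-1}\cdot x, \qquad x^{k-1}\in R_{g^{k-1}}=R_{g^{-1}} .
\]
Then $x^k = x\cdot x^{k-1}\in R_gR_{g^{-1}}=0$, so $x$ is nilpotent. Combined with the reduction above, $R$ is graded $m$-nil clean. Note that this case needs no torsion hypothesis.

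\textbf{Case (ii).} Here I would use the structure of graded-local PI rings of finite support together with the hypothesis that $|G|$ is a unit in $R$. First, since $R$ is graded-local, $R/J^g(R)$ is a graded division ring, so every nonzero homogeneous element of $R/J^g(R)$ is invertible. Next, I want $J^g(R)$ to be graded-nil. I will use $J(R_e)=J^g(R)\cap R_e$ (the result of \cite{c1} quoted earlier) together with the fact that $J(R_e)$ is nil because $R_e$ is $m$-nil clean \cite[Proposition 1.2]{mn}. This handles the identity component. For the other components I would invoke the Cohen--Montgomery duality / finite-group-grading results: when $|G|$ is invertible, $J^g(R)=J(R)$, and for PI rings graded by a finite group, $J(R)$ is nil whenever $J(R_e)$ is nil. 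This is the main obstacle, and I would cite the appropriate known theorem (Cohen--Montgomery, or Kelarev's result for PI rings, \cite{k0}) rather than reprove it.

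With $J^g(R)$ graded-nil, Theorem~\ref{q} applies: $G$ is $(m-1)$-torsion free and $m-1\in U(R)$, so it suffices to show that $\overline R=R/J^g(R)$ is graded $m$-nil clean. $\overline R_e=R_e/J(R_e)$ is an $m$-nil clean ring with zero Jacobson radical. In a graded division ring $\overline R_e$ is a division ring, and a division ring that is $m$-nil clean has only zero as a nilpotent, so it is $m$-potent. Now take a nonzero homogeneous $\bar x\in\overline R_g$. It is a unit, and $\bar x^{k}\in\overline R_e$ with $k=o(g)$ is a nonzero $m$-potent, i.e.\ $(\bar x^k)^{m-1}=1$. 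Thus $\bar x$ has finite multiplicative order dividing $k(m-1)$. Comparing degrees, the power $\bar x^{m-1}$ lies in $\overline R_{g^{m-1}}$, and we need the support to collapse to $e$. The idea is that $(m-1)$-torsion freeness forces $g^{m-1}\neq e$, and the goal is then to derive a contradiction with nonzero homogeneous elements outside degree $e$. I expect this step to require the hypothesis $|G|\in U(R)$, via the averaging or trace argument relating $\overline R$ to $\overline R_e$, to rule out nontrivial components. The conclusion is that $\overline R_g=0$ for $g\ne e$. Hence $\overline R=\overline R_e$ is $m$-nil clean, and Theorem~\ref{q} lifts this back to $R$.
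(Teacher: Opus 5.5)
\textbf{Part (i)} is correct. It is the paper's argument, except that you use the order of $g$ where the paper uses $|G|$; this changes nothing.

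\textbf{Part (ii)} has a genuine gap, at the step you yourself flag: the claim that $\overline R_g=0$ for $g\neq e$ is never proved. No averaging or trace argument can prove it, because it is false under the stated hypotheses.

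Take $m=5$, $G=\mathbb{Z}_3$, and $R=\mathbb{F}_5[t]/(t^3-1)$ with grading $R_i=\mathbb{F}_5t^i$. Every hypothesis of (ii) holds:
\begin{itemize}
\item $R$ is commutative, hence PI.
\item Each nonzero homogeneous element $ct^i$ is a unit with inverse $c^{-1}t^{3-i}$. So $R$ is a graded division ring, hence graded-local with $J^g(R)=0$.
\item $G$ is $4$-torsion free.
\item $m-1=4$ and $|G|=3$ are units in $\mathbb{F}_5$.
\item $R_e=\mathbb{F}_5$ is $5$-nil clean by Fermat.
\end{itemize}
Yet $\overline R=R$ has $\overline R_1=\mathbb{F}_5t\neq 0$, and $t$ is not graded $5$-nil clean. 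The only homogeneous nilpotent is $0$. Moreover $(ct^i)^5=c\,t^{2i}$ equals $ct^i$ only when $c=0$ or $i=0$. So $t=f+n$ would force $t=f\in R_0$, which is impossible. Equivalently, Proposition~\ref{2.5}(ii) would require $t$ to be nilpotent, but $t^3=1$.

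What you did establish, namely that $\bar x$ is a unit whose order divides $k(m-1)$, holds here without any contradiction. The hypotheses of (ii) permit $\overline R$ to be a group-algebra or crossed-product type graded division ring over $\overline R_e$. Neither $(m-1)$-torsion freeness nor $|G|\in U(R)$ can eliminate its nontrivial components.

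Up to the last step, your route in (ii) parallels the paper's: $J^g(R)=J(R)$ because $|G|$ is invertible, this radical is nil by Kelarev's PI theorem, and Theorem~\ref{q} reduces everything to $R/J^g(R)$. The paper closes the final step differently. It cites an isomorphism $H\cong R_e/J(R_e)$ between the homogeneous part $H$ of $R/J^g(R)$ and $R_e/J(R_e)$. That fails in the same example: $H$ has $1+3\cdot 4=13$ elements, while $R_e/J(R_e)=\mathbb{F}_5$ has $5$.

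So statement (ii) is false as written, and the gap in your proposal cannot be closed. Closing it would need an extra hypothesis forcing the homogeneous elements of $R_g$, $g\neq e$, to be nilpotent modulo a graded-nil ideal, in the way that $R_gR_{g^{-1}}=0$ does in (i).
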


\begin{proof}
$(i)$. Suppose that $|G| = n$ and let $r \in R_h$, where $h \neq e$. Since $R$ is a $G$-graded ring, it follows that $r^k \in R_{h^k}$ for every positive integer $k$. In particular, $r^{n-1} \in R_{h^{\,n-1}}$. Because $G$ is a finite group of order $n$, we have $h^n = e$, and hence $h^{\,n-1} = h^{-1}$. Therefore, $r^{n-1} \in R_{h^{-1}}$. Consequently,
$r^n = r r^{n-1} \in R_h R_{h^{-1}}$. By the hypothesis that $R_g R_{g^{-1}} = 0$ for every $g \neq e$, it follows that $R_h R_{h^{-1}} = 0$. Hence $r^n = 0$, so $r$ is nilpotent. This shows that every homogeneous element of $R_h$ with $h \neq e$ is nilpotent and by assumption $R_e$ is an $m$-nil clean ring. Therefore $R$ is a graded $m$-nil clean.

$(ii)$. Let $R_e$ be an $m$-nil clean ring. Then from \cite[Proposition 1.2]{mn}, it follows that $J(R_e)$ is nil.
Hence the quotient $R_e / J(R_e)$ is also $m$-nil clean, as it is a homomorphic image of $R_e$.

As $|G| \in U(R)$, it follows by \cite[Theorem 4.4]{c1} that $J^{g}(R) = J(R)$. Moreover, from \cite[Theorem 3]{k}, it follows that the Jacobson radical $J(R)$ is a nil ideal and consequently $J^g(R)$ is a graded-nil ideal.

Let $H$ denote the homogeneous part of the quotient ring $R / J^g(R)$. Then by \cite[Theorem 3.27]{i}, we obtain an isomorphism $H \cong {R_e / J(R_e)}$.
Thus it follows that every homogeneous element of $R / J^g(R)$ is $m$-nil clean. Finally, by applying Theorem~\ref{q}, we conclude that $R$ is a graded $m$-nil clean ring.\end{proof}

A homogeneous element $a$ of a $G$-graded ring is said to be graded $\pi$-regular (see \cite{i}) if $a=f+u$ for some homogeneous idempotent $f$ and homogeneous unit $u$ with $af=fa$ and $faf$ is nilpotent. See \cite{i}, the uniqueness of a graded strongly $\pi$-regular decomposition holds in a $G$-graded ring.

\begin{Proposition}\label{mn}
Any strongly $m$-nil clean element in a ring $R$ is a strongly $\pi$-regular element in $R$.\end{Proposition}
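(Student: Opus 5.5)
The plan is to use the standard characterisation of strongly $\pi$-regular elements. An element $a\in R$ is strongly $\pi$-regular if $a^k\in a^{k+1}R\cap Ra^{k+1}$ for some $k\geq 1$. This holds as soon as we find an idempotent $e$ commuting with $a$ such that $ae$ is a unit of the corner ring $eRe$ and $a(1-e)$ is nilpotent. So let $a=f+n$ with $f^m=f$, $n$ nilpotent and $fn=nf$; we produce such an $e$ directly from $f$.

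\textbf{Step 1 (the idempotent).} Put $e=f^{m-1}$. Then $e^2=f^{2m-2}=f^{m}f^{m-2}=f^{m-1}=e$ (for $m=2$ this is just $e=f$). Since $f$ and $n$ commute, $e$ commutes with $f$, with $n$, and hence with $a$. We also have $fe=f^m=f$ and therefore $f(1-e)=0$.

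\textbf{Step 2 (the two corners).} On the complementary corner, $a(1-e)=f(1-e)+n(1-e)=n(1-e)$. This element is nilpotent because $n$ is nilpotent and commutes with $1-e$. On the corner $eRe$, $ae=f+ne$. Here $f\in eRe$ is a unit of $eRe$ with inverse $f^{m-2}e$: indeed $f\cdot f^{m-2}e=f^{m-1}e=e$, and for $m=2$ the inverse is $e$ itself. The element $ne\in eRe$ is nilpotent and commutes with $f$. Hence $ae=f\bigl(e+f^{-1}ne\bigr)$ is a unit of $eRe$, since $e$ plus a commuting nilpotent is a unit in $eRe$ (use the finite geometric series).

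\textbf{Step 3 (conclusion).} Let $u\in eRe$ be the inverse of $ae$ in $eRe$. Because $ae$ commutes with everything that $a$ and $e$ commute with, $u$ commutes with $a$. Choose $k$ with $(a(1-e))^k=0$. Since $a$ and $e$ commute, $a^k=a^ke+a^k(1-e)=a^ke$. Consequently
\[
a^{k+1}u=a^k(ae)u=a^ke=a^k,
\]
and similarly $ua^{k+1}=a^k$. Thus $a^k\in a^{k+1}R\cap Ra^{k+1}$, and $a$ is strongly $\pi$-regular.

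The only delicate point is Step 2: verifying that $f$ really is invertible in $eRe$ (including the degenerate case $m=2$), and that $u$ commutes with $a$ so that the one-sided identities give both inclusions. Everything else is routine algebra using the commutation $fn=nf$.
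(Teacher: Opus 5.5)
Your argument is correct, but it reaches the conclusion by a different route from the paper's. Both proofs use the same idempotent $f^{m-1}$.

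The paper works with its complement $p=1-f^{m-1}$. It writes $a=(1-f^{m-1})+(v+n)$ with $v=f+f^{m-1}-1$, and checks three things: $v+n$ is a unit, $p$ commutes with $a$, and $pap=pn$ is nilpotent. It then relies on the known description of strongly $\pi$-regular elements as exactly those admitting such an idempotent-plus-unit decomposition.

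You instead use $e=f^{m-1}$ as a Fitting-type splitting: $a$ is invertible on $eRe$ and nilpotent on $(1-e)R(1-e)$. You then verify Drazin's condition $a^k\in a^{k+1}R\cap Ra^{k+1}$ directly, and your $u$ is in effect the Drazin inverse of $a$.

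What your route buys:
\begin{itemize}
\item It is self-contained with respect to the standard definition, with no appeal to the decomposition characterization.
\item It supplies what the paper only asserts, namely why $v$ is a unit: your corner inverse $f^{m-2}e$ of $f$ gives $v^{-1}=f^{m-2}e-(1-e)$.
\end{itemize}

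What the paper's route buys: it outputs exactly the decomposition $a=(1-f^{m-1})+(f+f^{m-1}-1+n)$, which is reused via uniqueness in Propositions~\ref{2.16} and~\ref{2.17}. You can recover it from your Step~2 in one line, as $a=(1-e)+\bigl(ae+(1-e)(a-1)(1-e)\bigr)$.

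One small remark on Step~3. The reason you give for $au=ua$ is phrased the wrong way round. What is actually needed is that $u$ commutes with anything that commutes with both $e$ and $ae$. The fact is immediate anyway: from $u=eu=ue$ and $ae=ea$ you get $au=(ae)u=e=u(ae)=ua$.
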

\begin{proof}
    Let $a$ be strongly $m$-nil clean in $R$. Then $a=f+n$ with $fn=nf$, where $f$ is an $m$-potent and $n$ is a nilpotent in $R$. Thus $a=(1-f^{m-1})+(v+n)$, where $v=f+f^{m-1}-1$ is a unit. Since $fn = nf$, it follows that $f^{m-1}n = nf^{m-1}$, and hence $vn = nv$. Because $v$ is a unit commuting with the nilpotent element $n$, the element $v + n$ is also a unit in $R$.

Moreover, since $f^{m-1}$ commutes with $n$, we obtain
$a(1 - f^{m-1}) = (1 - f^{m-1})a$. Further, $(1 - f^{m-1})a(1 - f^{m-1})=(1 - f^{m-1})n(1 - f^{m-1})
= (1 - f^{m-1})n$. As $n$ is nilpotent and $1 - f^{m-1}$ commutes with $n$, the element $(1 - f^{m-1})n$ is nilpotent in $R$. Therefore, $a = (1 - f^{m-1}) + (v + n)$
is a strongly $\pi$-regular decomposition of $a$ in $R$.\end{proof}

Since every graded strongly $m$-nil clean element of a graded ring $R$ is, in particular, a strongly $m$-nil clean element of $R$, Proposition~\ref{mn} implies that every graded strongly $m$-nil clean element of $R$ is strongly $\pi$-regular in $R$. Thus we have the implication
\[
\begin{array}{c}
\text{graded strongly $m$-nil} \\
\text{ clean element}
\end{array}
\;\Longrightarrow\;
\begin{array}{c}
\text{strongly $m$-nil } \\
\text{clean element}
\end{array}
\;\xRightarrow{\text{Proposition~\ref{mn}}}\;
\begin{array}{cc}
\text{strongly $\pi$-regular } \\
\text{element.}
\end{array}
\]

\begin{Remark}
However, the graded analogue of the above implication does not necessarily hold. In general,
\[
\text{graded strongly $m$-nil clean}
\;\not\Longrightarrow\;
\text{graded strongly $\pi$-regular}.
\]

To see this, consider the ring $R=\begin{pmatrix}
    \mathbb{Z}_{2} & \mathbb{Z}_{2} \\ 0 & \mathbb{Z}_{2} \end{pmatrix}$ endowed with a $\mathbb{Z}_{2}$-grading as
\[
R_{0}=
\begin{pmatrix}
\mathbb{Z}_{2} & 0 \\
0 & \mathbb{Z}_{2}
\end{pmatrix}, 
\qquad
R_{1}=
\begin{pmatrix}
0 & \mathbb{Z}_{2} \\
0 & 0
\end{pmatrix}.
\]
Then the homogeneous element 
$\begin{pmatrix}
0 & 1 \\
0 & 0
\end{pmatrix}$ is graded strongly $2$-nil clean, but it is not graded $\pi$-regular.
\end{Remark}

\begin{Proposition}
Let $R=\displaystyle \bigoplus_{g \in G} R_g$ be a $G$-graded $m$-nil clean ring. Then the identity component $R_e$ is strongly $\pi$-regular.
\end{Proposition}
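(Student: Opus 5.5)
The plan is to reduce the statement to the ungraded setting through the identity component, and then rely on a known structural property of $m$-nil clean rings. By Proposition \ref{2.5}$(i)$, since $R$ is graded $m$-nil clean, $R_e$ is an $m$-nil clean ring: every $x\in R_e$ decomposes as $x=f+n$ with $f^m=f$ and $n$ nilpotent, both lying in $R_e$ by Remark \ref{2.2}. So it suffices to prove the ungraded statement that every $m$-nil clean ring $S$ (here $S=R_e$) is strongly $\pi$-regular, i.e.\ that for each $a\in S$ the chain $aS\supseteq a^2S\supseteq\cdots$ stabilizes, or equivalently that $a$ has a decomposition $a=e'+u$ of the type used in Proposition \ref{mn}.

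I would carry this out in three steps. First, by \cite[Proposition 1.2]{mn}, $J(S)$ is nil. Second, I would pass to $\bar S=S/J(S)$. It is again $m$-nil clean, being a homomorphic image, and it is semiprimitive. Here I would try to show that $\bar S$ satisfies a polynomial-type identity forcing it to be (strongly) regular, or at least strongly $\pi$-regular. The natural route is to use the $m$-potents: each element of $\bar S$ differs from an $m$-potent $f$ by a nilpotent, and $f^{m-1}$ is an idempotent with $f$ a unit in the corner ring $f^{m-1}\bar S f^{m-1}$. This should let me mimic the argument of Proposition \ref{mn} corner by corner. Third, I would lift strong $\pi$-regularity from $\bar S$ back to $S$ modulo the nil ideal $J(S)$. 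Strong $\pi$-regularity is known to pass from $S/N$ to $S$ when $N$ is a nil ideal, because a power of $a$ landing in a stable idempotent-plus-unit decomposition lifts: idempotents lift modulo nil ideals, and units lift modulo $J$.

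The main obstacle is the second step: the decomposition $a=f+n$ given by $m$-nil cleanness need not commute. Proposition \ref{mn} therefore cannot be applied directly, since it requires $fn=nf$. To deal with this, I would first look in \cite{mn} for a direct statement that $m$-nil clean rings are strongly $\pi$-regular (the analogue of the nil clean case) and simply invoke it for $S=R_e$. Failing that, I would work in $\bar S$ with the idempotent $f^{m-1}$ and the Peirce decomposition it induces. The goal there is to show that the off-diagonal parts of $a$ are nilpotent modulo $J$, which would restore enough commutation to produce the decomposition of Proposition \ref{mn} for a suitable power of $a$.
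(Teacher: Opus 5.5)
You have found the right obstacle, and it is exactly where the paper's own proof breaks. The paper gets ``$R_e$ is $m$-nil clean'' from Proposition~\ref{2.5}$(i)$ and then invokes Proposition~\ref{mn}. But Proposition~\ref{mn} needs a decomposition $x=f+n$ with $fn=nf$. That two-line argument is therefore valid only when $R$ is graded \emph{strongly} $m$-nil clean: then $R_e$ is strongly $m$-nil clean and Proposition~\ref{mn} applies verbatim.

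Your proposal does not close this hole. The reduction to the ungraded claim ``every $m$-nil clean ring is strongly $\pi$-regular'' is fine. However, all the content sits in your Step~2, where you offer only hopes, and neither idea you float can work:
\begin{enumerate}[(a)]
\item Strong regularity is the wrong target: $M_2(\mathbb{F}_2)$ is nil clean and semiprimitive but has nonzero nilpotents.
\item The Peirce idea is empty. For every $x$ and every idempotent $e$, the pieces $ex(1-e)$ and $(1-e)xe$ are square-zero, so their nilpotency is automatic and gives no commutation between $f$ and $n$.
\end{enumerate}

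In fact no argument can succeed, because the statement is false as written. Let $S=\prod_{n\ge 1}M_n(\mathbb{F}_2)$ with the trivial grading ($R=R_e=S$ and $R_g=0$ for $g\neq e$).
\begin{enumerate}[(1)]
\item By a theorem of \v{S}ter (Linear Algebra Appl., 2018), every square matrix over $\mathbb{F}_2$, of any size, is an idempotent plus a nilpotent $N$ with $N^4=0$. Applying this in each coordinate shows $S$ is nil clean. Since every idempotent is $m$-potent, $S$ is graded $m$-nil clean for every $m\ge 2$.
\item Let $Z_n$ be the $n\times n$ nilpotent Jordan block and $a=(Z_n)_{n\ge 1}\in S$. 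Suppose $a^k=a^{k+1}b$ with $b=(b_n)$. Then $Z_n^k=Z_n\,Z_n^k\,b_n$, and iterating gives $Z_n^k=Z_n^{k+j}b_n^{\,j}=0$ for $j\ge n$. This fails as soon as $n>k$.
\item Hence $a$ is not strongly $\pi$-regular, and neither is $R_e=S$.
\end{enumerate}
Here $J(S)=0$ (indeed $S$ is von Neumann regular), so $\bar S=S$. It is exactly your Step~2 that fails, while your Steps~1 and~3 are vacuous for this ring.

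The proposition, and your plan, should be restricted to graded strongly $m$-nil clean rings. There the paper's short argument is correct.
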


\begin{proof}
Since $R$ is a graded $m$-nil clean ring, it follows from Proposition~\ref{2.5}(i) that $R_e$ is an $m$-nil clean ring. Hence, by Proposition~\ref{mn}, every element of $R_e$ is strongly $\pi$-regular. Therefore, $R_e$ is a strongly $\pi$-regular ring.
\end{proof}

\begin{Proposition}\label{2.16}
Let $R$ be a ring and $a \in R$. Suppose that $a$ is strongly $\pi$-regular with strongly $\pi$-regular decomposition $a=f+u$. Then $a$ is strongly $m$-nil clean element in $R$ if and only if there exists an $m$-potent $g \in R$ commuting with $f$ and $u$ such that $f-g+u$ is nilpotent.\end{Proposition}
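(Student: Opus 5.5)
The converse direction is a direct computation, while the forward direction needs a double-commutant property of the idempotent $f$. Throughout, $a=f+u$ is a strongly $\pi$-regular decomposition: $f^2=f$, $u\in U(R)$, $af=fa$, and $faf$ is nilpotent.

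\emph{Converse.} Suppose $g^m=g$ commutes with $f$ and $u$, and $n:=f-g+u$ is nilpotent. Then $a=g+n$. Since $g$ commutes with $f$, $u$ and itself, it commutes with $n$. Hence $a=g+n$ is a strongly $m$-nil clean decomposition.

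\emph{Forward direction: setup.} Suppose $a=g+n$ with $g^m=g$, $n$ nilpotent and $gn=ng$. Then $f-g+u=a-g=n$ is already nilpotent. So the only thing to prove is that $g$ commutes with $f$ and with $u$. Because $g$ commutes with $g$ and with $n$, it commutes with $a$. It therefore suffices to prove the following claim.

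\emph{Claim.} Every $c\in R$ with $ca=ac$ commutes with $f$, and hence with $u=a-f$.

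To prove the claim I would first extract the standard facts from the decomposition.

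\begin{enumerate}[(a)]
\item Since $f$ commutes with $a$, $faf=fa=af$. So $fa$ is nilpotent, and $fa^k=(fa)^k=0$ for some $k$.
\item $u$ commutes with $f$, since $u=a-f$ and both $a$ and $f$ commute with $f$.
\item In the corner ring $(1-f)R(1-f)$ we have $(1-f)a=(1-f)u+(1-f)f=(1-f)u$. This element is invertible with inverse $(1-f)u^{-1}$, because $u^{-1}$ commutes with $f$. Hence $(1-f)a^k$ is invertible in this corner. Let $w\in(1-f)R(1-f)$ be its inverse, so that $w\,a^k(1-f)=(1-f)=(1-f)a^k\,w$.
\end{enumerate}

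\emph{Proof of the claim.} Let $c$ commute with $a$; we kill the two off-diagonal Peirce pieces of $c$.

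For the piece $(1-f)cf$, note $a^k f=0$ by (a), so
\[(1-f)cf=w\,a^k(1-f)cf=w(1-f)c\,a^kf=0.\]
Here we used that $a^k$ commutes with $1-f$ and with $c$.

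For the piece $fc(1-f)$, symmetrically,
\[fc(1-f)=fc(1-f)a^k w=f a^k c(1-f)w=0.\]

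Thus $cf=fcf=fc$. Taking $c=g$ shows $g$ commutes with $f$, and then with $u=a-f$, which finishes the forward direction.

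The main obstacle is this double-commutant claim, namely that the idempotent of a strongly $\pi$-regular decomposition commutes with everything that commutes with $a$. The key point is that $a^k$ annihilates $f$ on both sides while being invertible on the complementary corner $(1-f)R(1-f)$. One must also check $uf=fu$; this follows from $af=fa$ as shown in (b), and does not need to be assumed separately.
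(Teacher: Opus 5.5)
Your proof is correct, and your converse is the same as the paper's. The forward direction takes a genuinely different route.

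The paper applies Proposition~\ref{mn} to build a second strongly $\pi$-regular decomposition $a=(1-g^{m-1})+(g+g^{m-1}-1+n)$. It then invokes the uniqueness of strongly $\pi$-regular decompositions (cited from Diesl) to conclude $f=1-g^{m-1}$ and $u=g+g^{m-1}-1+n$. From these formulas, commutation with $g$ and $f-g+u=n$ are immediate.

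You instead observe that $f-g+u=a-g=n$ holds automatically, so only the commutation needs proof. You then prove directly that $f$ lies in the double commutant of $a$, via a Peirce argument: $a^kf=fa^k=0$, while $(1-f)a^k$ is invertible in $(1-f)R(1-f)$.

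The two routes buy different things:
\begin{enumerate}
\item The paper's argument is shorter given the cited uniqueness theorem. It also yields the explicit identification $f=1-g^{m-1}$.
\item Your argument is self-contained and needs neither Proposition~\ref{mn} nor the uniqueness result.
\item Your commutation step never uses that $g$ is $m$-potent; it works for any element commuting with $a$.
\item The double-commutant fact you prove is strong enough to recover the uniqueness of strongly $\pi$-regular decompositions with a few more lines. If $a=f+u=f'+u'$, then $f'$ commutes with $f$, and $f(1-f')a$ is both nilpotent and invertible in the corner $f(1-f')Rf(1-f')$, which forces $f(1-f')=0$; symmetrically $f'(1-f)=0$, so $f=f'$.
\end{enumerate}
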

\begin{proof}
    Suppose that $a$ is a strongly $m$-nil clean element in $R$. Then $a=g+n$ for some $m$-potent $g$ and nilpotent $n$ with $gn=ng$. Then $a=(1-g^{m-1})+(g+g^{m-1}-1+n)$ is a strongly $\pi$-regular decomposition, by Proposition \ref{mn}. Then by the uniqueness of the strongly $\pi$-regular decomposition (see \cite[Proposition 2.6]{d}), it follows that $f=1-g^{m-1}$ and $u=g+g^{m-1}-1+n$. Hence, $f-g+u=(1-g^{m-1})-g+(g+g^{m-1}-1+n)=n$, which is nilpotent in $R$. Moreover, since $g$ commutes with $n$, it follows that $g$ commutes with both $f$ and $u$.
       
    Conversely, suppose that there exists an $m$-potent element $g \in R$ commuting with $f$ and $u$ such that $f-g+u$ is nilpotent in $R$. Since $a=f+u$, we can write $a=g+(f-g+u)$. Because $f-g+u$ is nilpotent and $g$ commutes with $f$ and $u$, it follows that $g$ commutes with $f-g+u$. Hence $a$ can be expressed as the sum of an $m$-potent element and a commuting nilpotent element. Therefore $a$ is strongly $m$-nil clean in $R$.
    \end{proof}

We now establish the graded analogue of the preceding results, which serves as a useful tool in the characterization of graded strongly $m$-nil clean elements in graded rings. 

\begin{Proposition}\label{2.17}
Let $R=\displaystyle \bigoplus_{g \in G} R_g$ be a $G$-graded ring, where $G$ is $(m-1)$-torsion free for $m\geq 2$. Suppose $a \in R$ be a graded strongly $\pi$-regular element with graded strongly $\pi$-regular decomposition $a=f+u$. Then $a$ is graded strongly $m$-nil clean in $R$ if and only if there exists a homogeneous $m$-potent element $g \in R_e$ such that $g$ commutes with both $f$ and $u$, $f-g+u$ is nilpotent in $R$, and $u \in R_e$. \end{Proposition}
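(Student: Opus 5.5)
The plan is to follow the pattern of Proposition \ref{2.16}, adding degree bookkeeping at each step. Throughout, the graded strongly $\pi$-regular decomposition $a=f+u$ has $f$ a homogeneous idempotent and $u$ a homogeneous unit.

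For the forward direction, suppose $a=g+n$ with $g$ a homogeneous $m$-potent, $n$ a homogeneous nilpotent and $gn=ng$. The steps are as follows.

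\begin{enumerate}
\item If $a=0$, one can take $g=0$ and the claim is degenerate. If $a\neq 0$ is homogeneous of degree $h$, Remark \ref{2.2} places $g$ and $n$ in $R_h$.
\item If $g\neq 0$, Corollary \ref{2.41} gives $h=e$, so $g\in R_e$. If $g=0$, then trivially $g\in R_e$.
\item As in Proposition \ref{mn}, $a=(1-g^{m-1})+(g+g^{m-1}-1+n)$ is a strongly $\pi$-regular decomposition.
\item When $g\in R_e$ and $a\in R_e$, both summands are homogeneous of degree $e$. So this is a graded strongly $\pi$-regular decomposition.
\item By the uniqueness of graded strongly $\pi$-regular decompositions (cited from \cite{i}), or simply the ungraded uniqueness \cite[Proposition 2.6]{d}, we get $f=1-g^{m-1}$ and $u=g+g^{m-1}-1+n\in R_e$.
\item Then $f-g+u=n$ is nilpotent, and $g$ commutes with $f$ and $u$ because it commutes with $n$.
\end{enumerate}

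The delicate case is $g=0$ with $a=n\notin R_e$ nilpotent. Then $a=f+u$ with $af=fa$ and $faf$ nilpotent. Since $a$ is nilpotent, $1-f$ times a unit must be nilpotent in the corner. So I expect to conclude $f=1$ and $u=a-1$, but $u$ is then not homogeneous unless $a\in R_e$. So a nonzero homogeneous nilpotent outside $R_e$ simply admits no graded strongly $\pi$-regular decomposition, and this case is vacuous. I would argue this carefully: from uniqueness of the ungraded decomposition, $a=1+(a-1)$ is the strongly $\pi$-regular decomposition of a nilpotent $a$. Hence $f=1$, forcing $u=a-1$, which must be homogeneous; this gives $a\in R_e$ or $a=0$. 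Thus in all cases $g\in R_e$ and $u\in R_e$.

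For the converse, assume $g\in R_e$ is an $m$-potent commuting with $f$ and $u$, $f-g+u$ is nilpotent, and $u\in R_e$. Write $a=g+(f-g+u)$.
\begin{enumerate}
\item Since $u$ is a nonzero homogeneous unit in $R_e$, $a=f+u$ is homogeneous. Because $f$ is homogeneous and $f$, $u$, $a$ must share a degree, we get $f\in R_e$ whenever $a\neq 0$. More simply, $f=a-u$ is homogeneous, and by Lemma \ref{2.4} with Corollary \ref{2.41} a nonzero idempotent lies in $R_e$ anyway.
\item Hence $n:=f-g+u\in R_e$ is a homogeneous nilpotent.
\item It commutes with $g$ because $g$ commutes with $f$, $u$ and itself.
\end{enumerate}
So $a$ is graded strongly $m$-nil clean.

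The main obstacle is the homogeneity and degree bookkeeping in the forward direction. This means justifying that the decomposition $f=1-g^{m-1}$, $u=g+g^{m-1}-1+n$ is graded, which uses $g\in R_e$ from Corollary \ref{2.41}, and handling the degenerate nilpotent case. In that case, uniqueness of the strongly $\pi$-regular decomposition is what forces $u\in R_e$.
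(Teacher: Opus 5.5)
Your proposal is correct and follows the paper's proof. Reduce to $a\in R_e$, compare $a=(1-g^{m-1})+(g+g^{m-1}-1+n)$ with $a=f+u$ via uniqueness of the strongly $\pi$-regular decomposition, and for the converse use Corollary \ref{2.41} to put $f$ in $R_e$ and write $a=g+(f-g+u)$. The only difference is how you exclude a nonzero $a$ of degree $h\neq e$. The paper says $f=0$, so $a=u$ is a unit that is also nilpotent. You instead force $f=1$, $u=a-1$ by uniqueness and observe that $u$ is then not homogeneous. Your version is slightly more careful: the paper tacitly places $f$ in the degree of $a$, and it invokes Proposition \ref{2.5}, which is stated for graded $m$-nil clean rings rather than for single elements.
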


\begin{proof}
Let $a$ be a graded strongly $m$-nil clean element in $R$. Suppose $0 \neq a \in R_h$, where $h \neq e$. Then by Corollary \ref{2.41}, it follows that $f=0$. Thus it follows that $a=u$ is a unit in $R$. Since $G$ is $(m-1)$-torsion free, so by Proposition \ref{2.5}, $a$ is nilpotent, which is impossible. Therefore, $a$ must belong to $R_e$. Then $a=g+n$ for some $m$-potent $g$ and nilpotent $n$ in $R_e$ with $gn=ng$. This yields the decomposition $a=(1-g^{m-1})+(g+g^{m-1}-1+n)$ which is a graded strongly $\pi$-regular decomposition of $a$ in $R_e$. By the uniqueness of such a decomposition (see \cite[Proposition 2.6]{d}), it follows that $f=1-g^{m-1}$ and $u=g+g^{m-1}-1+n$. Hence $u \in R_e$. Moreover for this choice of homogeneous $m$-potent $g$, the element $f - g + u$ is nilpotent in $R$.

Conversely, suppose that there exists a homogeneous $m$-potent element $g \in R_e$ which commutes with both $f$ and $u$, and such that $f-g+u$ is nilpotent in $R$, where $u \in R_e$. Since $G$ is $(m-1)$-torsion free, it follows from Corollary~\ref{2.41} that $f \in R_e$. Consequently, $a \in R_e$, and hence $f-g+u$ is a homogeneous nilpotent element of $R_e$. Therefore, $a=g+(f-g+u)$, where $g$ is a homogeneous $m$-potent element and $f-g+u$ is a homogeneous nilpotent element commuting with $g$. Hence $a$ is a graded strongly $m$-nil clean element of $R$.
\end{proof}

\section{Graded $m$-nil clean extensions}
Let $A = \displaystyle \bigoplus_{g \in G} A_g$ and $B = \displaystyle \bigoplus_{g \in G} B_g$ be two commutative $G$-graded rings.
Let $J = \displaystyle \bigoplus_{g \in G} J_g$ be a homogeneous ideal of $B$ and let  $f : A \longrightarrow B$ be a graded ring homomorphism.

Following \cite{g}, the amalgamation of $A$ with $B$ along $J$ with respect to $f$, denoted by $R = A \bowtie^{f} J$, is a $G$-graded ring with decomposition $R = \displaystyle \bigoplus_{g \in G} R_g$, where each homogeneous component is given by $R_g = (A \bowtie^{f} J)_g
= \{\, (a_g,\, f(a_g) + j_g) : a_g \in A_g,\; j_g \in J_g \,\}$.

\begin{Theorem}
Let $A$ and $B$ be two $G$-graded rings, where $G$ is a $(m-1)$-torsion free for $m\geq 2$ and $f : A \longrightarrow B$ be a graded ring homomorphism. Let $J$ be a homogeneous ideal of $B$. Then the following conditions are equivalent :

  $(i)$. $A \bowtie ^f J$ is a graded $m$-nil clean ring.

  $(ii)$. $A$ and $f(A)+J$ are graded $m$-nil clean rings.
\end{Theorem}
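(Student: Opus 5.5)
The plan is to use the two graded surjections out of $R=A\bowtie^f J$. The first is the projection $p_A:R\to A$, $(a,f(a)+j)\mapsto a$. The second is the projection $p_B:R\to f(A)+J$, $(a,f(a)+j)\mapsto f(a)+j$. Both are ring homomorphisms, and both respect the grading: since $f$ is graded and $J$ is homogeneous, $f(A)+J$ is a graded subring of $B$ with $(f(A)+J)_g=f(A_g)+J_g$, and $p_A(R_g)=A_g$, $p_B(R_g)=f(A_g)+J_g$. The implication $(i)\Rightarrow(ii)$ then follows at once from Proposition \ref{prop3.4}(i), because $A$ and $f(A)+J$ are graded homomorphic images of $R$.

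For $(ii)\Rightarrow(i)$, I would not try to glue decompositions coordinatewise directly. The $m$-potents chosen in $A$ and in $f(A)+J$ need not be compatible with the amalgamation condition. Instead I reduce to components, using that $G$ is $(m-1)$-torsion free.

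First take a homogeneous $x=(a_g,f(a_g)+j_g)\in R_g$ with $g\neq e$. By Proposition \ref{2.5}(ii) applied to $A$, $a_g$ is nilpotent, say $a_g^k=0$. Applied to $f(A)+J$, the element $f(a_g)+j_g$ is nilpotent, say with exponent $l$. Then $x^{\max(k,l)}=0$, so $x$ is nilpotent, and $x=0+x$ is a graded $m$-nil clean decomposition.

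The case $g=e$ is the main obstacle. Here $R_e=A_e\bowtie^{f_e}J_e$ with $f_e=f|_{A_e}:A_e\to B_e$, and $J_e$ is an ideal of $B_e$. By Proposition \ref{2.5}(i), both $A_e$ and $f(A_e)+J_e=(f(A)+J)_e$ are $m$-nil clean rings. So I need the nongraded statement that if $A_e$ and $f_e(A_e)+J_e$ are $m$-nil clean, then so is $A_e\bowtie^{f_e}J_e$.

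I would prove this with the short exact sequence $0\to 0\times J_e\to A_e\bowtie^{f_e}J_e\to A_e\to 0$ via $p_A$.
\begin{enumerate}[(a)]
\item Each $j\in J_e\subseteq f(A_e)+J_e$ is $m$-nil clean in $f(A_e)+J_e$. I would show that $J_e$ is nil inside an $m$-nil clean ring. An alternative is to obtain the needed liftability directly: given $a\in A_e$ with $a=\varepsilon+\nu$, an $m$-potent plus a nilpotent, write $(a,f(a)+j)=(\varepsilon,f(\varepsilon))+(\nu,f(\nu)+j)$.
\item The remaining point is whether $f(\nu)+j$ can be made nilpotent. This is exactly where the hypothesis on $f(A)+J$ enters.
\item I expect to need to correct the $m$-potent. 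Write $f(a)+j=\varepsilon'+\nu'$ in $f(A_e)+J_e$, and choose the $m$-potent of $A_e$ compatibly modulo $f^{-1}(J_e)$.
\end{enumerate}
Since the paper works with commutative rings in this section, I would argue as follows. In a commutative $m$-nil clean ring the nilpotents form an ideal, and the quotient by the nilradical consists of $m$-potents. So the problem reduces to showing that $R_e/\mathrm{Nil}(R_e)$ satisfies $y^m=y$ for all $y$, i.e. that $(x^m-x)$ is nilpotent for every $x\in R_e$. Since $A_e$ and $f(A_e)+J_e$ are $m$-nil clean, $a^m-a$ and $(f(a)+j)^m-(f(a)+j)$ are nilpotent. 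Both coordinates of $x^m-x$ are therefore nilpotent, so $x^m-x$ is nilpotent in $R_e$.

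Finally, $m$-potents lift modulo the nilradical, since the relevant rings are commutative and $\mathbb{Z}[t]$-arguments apply. Lemma \ref{lp} can be invoked when $m-1$ is a unit; otherwise I would use the standard Newton-type iteration available for commutative rings. This gives $x=$ $m$-potent $+$ nilpotent in $R_e$, completing $(ii)\Rightarrow(i)$. The lifting step without a unit hypothesis on $m-1$ is where I expect the real difficulty to sit.
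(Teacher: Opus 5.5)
Your outline follows the paper's proof. You get $(i)\Rightarrow(ii)$ from the two graded projections and Proposition \ref{prop3.4}(i). You handle $g\neq e$ by applying Proposition \ref{2.5}(ii) in both coordinates, and $g=e$ by showing $x^m-x$ is nilpotent coordinatewise. The gap is the last step: a commutative ring in which every $x^m-x$ is nilpotent is $m$-nil clean. The paper imports this from \cite[Theorem 1.10]{mn}. You leave it open when $m-1\notin U(R_e)$, which the theorem does not assume, so Lemma \ref{lp} is not enough, and the tools you suggest would fail.

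Newton's iteration for $t^m-t$ needs $mx^{m-1}-1$ to be invertible modulo $N=\mathrm{Nil}(R_e)$. Modulo $N$ this element is $(m-1)\varepsilon-(1-\varepsilon)$ with $\varepsilon=\bar x^{m-1}$. It is not a unit whenever $\bar x\neq 0$ in a residue field whose characteristic divides $m-1$. For instance, in $\mathbb{Z}_4[s]/(s^2)$ with $m=3$ and $x=1+s$ you get $3x^2-1=2+2s$, which is nilpotent.

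More seriously, no element-by-element (universal $\mathbb{Z}[t]$) argument can work, because the elementwise statement is false. In $\mathbb{Z}[t]/((t^3-t)^2)$ the element $t^3-t$ is nilpotent. But the only $3$-potent of $\mathbb{Q}[t]/((t^3-t)^2)$ congruent to $t$ modulo the nilradical is $\frac{5}{2}t^3-\frac{3}{2}t^5$, which is not integral, so $t$ is not $3$-nil clean there. Separately, your step (a) is false: with $J=B$ the ideal $J_e=B_e$ need not be nil. You do not end up using it, though.

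The missing idea is that the hypothesis holds for every element of $R_e$, which forces finite residue fields of bounded size.
\begin{enumerate}[(1)]
\item Taking $y=2\cdot 1$ gives $(2^m-2)\cdot 1\in N$. So the reduced ring $R_e/N$ has squarefree characteristic $c$ with $c\mid 2^m-2$.
\item Each prime quotient of $R_e/N$ is a domain satisfying $y^m=y$, hence a finite field $\mathbb{F}_{p^s}$ with $p^s\le m$.
\item The Chinese remainder theorem, together with lifting idempotents modulo $N$, gives $R_e=\prod_{p\mid c}R_p$ with $p$ nilpotent in $R_p$.
\item In $R_p$, choose $K$ divisible by every admissible $s$ and large enough, and set $f=x^{p^K}$. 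Then $f\equiv x \pmod N$, since the $K$-th power of Frobenius is the identity on every residue field.
\item Write $x^m=x+n$ with $n^k=0$. For $1\le i<k$ the coefficients $\binom{p^K}{i}$ are divisible by a power of $p$ that is zero in $R_p$. Hence $f^m=(x^m)^{p^K}=(x+n)^{p^K}=x^{p^K}=f$.
\end{enumerate}
With this argument, or simply with the citation the paper uses, your proof is complete.
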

\begin{proof}
 Let $A \bowtie ^f J$ be a graded $m$-nil clean ring.  Then by \cite[Theorem 3.5]{g}, it follows that $A$ and $f(A)+J$ are homomorphic image of $A \bowtie ^f J$. Therefore, $A$ and $f(A)+J$ are graded $m$-nil clean rings.

 Conversely, suppose that $(a,f(a)+j) \in (A \bowtie ^f J)_g$ with $g \neq e$. Then $a \in A_g$ and $j \in J_g$. Since the group $G$ is $(m-1)$-torsion free and both $A$, $f(A)+J$ are graded $m$-nil clean rings, by Proposition \ref{2.5} it follows that $a$ and $j$ are nilpotent. Consequently, $(a,f(a)+j)$ is nilpotent. Now consider an element $(a,f(a)+j) \in (A \bowtie ^f J)_e$, where $a \in A_e$ and $j \in J_e$. Since $f$ is degree preserving ring homomorphism, it follows that $f(a)+j \in (f(A)+J)_e$. By \cite[Theorem 1.10.]{mn}, we have  both $a^m-a$ and $(f(a)+j)^m-(f(a)+j)$ are nilpotent. Consequently, $(a,f(a)+j)^m-(a,f(a)+j) \in (A \bowtie ^f J)_e$ is nilpotent. Then by \cite[Theorem 1.10]{mn}, it follows that $(A \bowtie ^f J)_e$ is an $m$-nil clean ring. Hence $A \bowtie^f B$ is a graded $m$-nil clean ring.
\end{proof}

Next we deal with the graded $m$-nil clean property of graded group rings.

Recall that a group $G$ is said to be a locally finite group if every finitely generated subgroup of $G$ is finite. A group $G$ is called a torsion group if every element of $G$ has finite order. If $p$ is prime, $g \in G$ is called the $p$-torsion element if the order of $g$ is $p^k$ for some $k \geq 1$. The group $G$ is called $p$-group if every element of $G$ is $p$-torsion.

\begin{Lemma} \label{3.1}
Let $R$ be a $G$-graded ring such that $(m-1)\in U(R)$ for $m\geq 2$. Suppose that $G$ is $(m-1)$-torsion free, locally finite $p$-group, where $p$ is nilpotent in $R$. If $R$ is a graded $m$-nil clean ring, then the group ring $RG$ is also a graded $m$-nil clean ring. \end{Lemma}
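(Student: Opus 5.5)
The plan is to reduce to Theorem~\ref{q} by passing to the quotient of $RG$ by its augmentation ideal. Let $\varepsilon : RG \to R$ be the augmentation map $\sum_h r_h h \mapsto \sum_h r_h$, and let $\Delta=\ker\varepsilon$ be the augmentation ideal. It is generated, as a left $R$-module, by the elements $r(h-1)$ with $r\in R$ homogeneous and $h\in G$. Since $RG/\Delta\cong R$ and $R$ is graded $m$-nil clean by hypothesis, Theorem~\ref{q} will give the claim once three hypotheses are checked for $RG$.

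\textbf{Step 1: $(m-1)$ is a unit in $RG$.} This is immediate: $m-1=(m-1)\cdot 1$ is central, and its inverse in $R$ (embedded via $r\mapsto r\cdot e$) is also an inverse in $RG$. The group $G$ is $(m-1)$-torsion free by hypothesis.

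\textbf{Step 2: $\Delta$ is homogeneous and $RG/\Delta\cong R$ as graded rings.} Using the grading $(RG)_g=\sum_h R_{gh^{-1}}h$, I would try to show that $\Delta=\bigoplus_g(\Delta\cap(RG)_g)$. The idea is to decompose an element $\sum_h r_h h$ of $\Delta$ into homogeneous pieces using the homogeneous components of the coefficients $r_h$, and check that each piece still has augmentation zero. This step needs care, because $\varepsilon$ sends $r_{gh^{-1}}h$ to $r_{gh^{-1}}\in R_{gh^{-1}}$, not to $R_g$. If the naive compatibility fails, the fallback is to replace $\Delta$ by the largest homogeneous ideal it contains, or by the homogeneous ideal generated by the elements $r(h-1)$, and to identify the quotient with $R$ directly. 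I expect this grading bookkeeping to be the main obstacle.

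\textbf{Step 3: $\Delta$ is graded-nil.} Fix a homogeneous $x\in\Delta$. Only finitely many group elements appear in its support, so $x\in \Delta(RH)$ for a finitely generated, hence finite, subgroup $H\le G$; here we use that $G$ is locally finite. Now $H$ is a finite $p$-group, and $p^k=0$ in $R$ for some $k$. In $RH$ we have $(h-1)^{p^t}\equiv h^{p^t}-1 \pmod{pRH}$, so $(h-1)^{p^t}\in pRH$ once $h^{p^t}=e$. Combined with the classical result that $\Delta(RH)$ is nilpotent modulo $pRH$ for a finite $p$-group $H$, this shows $\Delta(RH)$ is nilpotent modulo the nilpotent ideal $pRH$, and hence nilpotent. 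Therefore $x$ is nilpotent.

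\textbf{Conclusion.} Since $R\cong RG/\Delta$ is graded $m$-nil clean, Theorem~\ref{q} yields that $RG$ is graded $m$-nil clean. As a consistency check, via Proposition~\ref{2.5} one can verify directly that components of nonidentity degree in $RG$ consist of nilpotents modulo $\Delta$.
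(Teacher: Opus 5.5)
Your plan is the paper's own argument: pass to $RG/\Delta(RG)\cong R$ and invoke Theorem~\ref{q}. The point you flagged in Step~2 is a genuine gap, and it cannot be closed. With the grading $(RG)_g=\sum_h R_{gh^{-1}}h$, an element $\sum_h r_{gh^{-1}}h$ of $(RG)_g$ has augmentation $\sum_h r_{gh^{-1}}$. This is a sum of elements of the pairwise distinct components $R_{gh^{-1}}$, so it vanishes only if all coefficients vanish. Hence $\Delta\cap(RG)_g=0$ for every $g$, and $\Delta$ has no nonzero homogeneous elements. It is therefore not homogeneous (e.g.\ $1\cdot e-1\cdot h\in\Delta$ for $h\neq e$), and the largest homogeneous ideal inside it is $0$. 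Your second fallback also fails: a homogeneous ideal containing $1\cdot h-1\cdot e$ contains the component $1\cdot e$, which is the identity of $RG$. Your closing consistency check breaks down as well, since $\varepsilon((RG)_g)=\sum_h R_{gh^{-1}}=R\ni 1$. So Step~3 is vacuous, and Theorem~\ref{q} cannot be applied: it needs a homogeneous graded-nil ideal with a graded quotient.

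No repair is possible, because the statement is false for this grading. For $g\neq e$, the element $1\cdot g\in R_e\,g\subseteq(RG)_g$ satisfies $(1\cdot g)^k=1\cdot g^k$. So it is a homogeneous unit of degree $g\neq e$. Since $G$ is $(m-1)$-torsion free, Proposition~\ref{2.5}(ii) would force it to be nilpotent, which is impossible unless $R=0$.

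A concrete counterexample: take $R=\mathbb{Z}_2$ trivially graded, $G=\{e,g\}$ of order $2$, and $m=2$ (or any even $m$). Every hypothesis holds and $(RG)_g=\mathbb{Z}_2\,g$. But $g$ is not a sum of a homogeneous $m$-potent ($0$ or $1$) and a homogeneous nilpotent (only $0$). In fact the map $r_b h\mapsto r_b\,b^{-1}h$ (for $r_b\in R_b$) identifies this $RG$ with the ordinary group ring graded by $(RG)_g=Rg$. There $R$ sits in degree $e$ and $\Delta$ is visibly not homogeneous.

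The paper's proof has exactly the same defect: it only records $RG/\Delta(RG)\cong R$ as rings before invoking Theorem~\ref{q}. Your argument does go through verbatim if $RG$ is instead graded by coefficients, $(RG)_g=R_gG$. Then $\varepsilon$ is a graded homomorphism and $\Delta$ is homogeneous. Your Step~3 (Connell's theorem applied to finite subgroups) shows $\Delta$ is nil, and Theorem~\ref{q} gives the conclusion.
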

\begin{proof}
Let $x \in RG$. We aim to show that $x$ is graded $m$-nil clean. Since $G$ is locally finite, there exists a finite subgroup $H$ of $G$ such that $x \in RH$. 
Therefore it suffices to consider the case when $G$ is a finite group. Since $p$ is nilpotent in $R$, from \cite[Theorem 9]{c2}, it follows that the augmentation ideal $\Delta(RG)$ is nilpotent. Furthermore, the quotient $RG/\Delta(RG)$ and $R$ are isomorphic as rings. Since $(m-1) \in U(R)=U((RG)_e)$, by Theorem \ref{q}, we find that $RG$ is a graded $m$-nil clean ring.\end{proof}

\begin{Theorem}
    Let $R=\displaystyle \bigoplus_{g \in G} R_g$ be a $G$-graded ring.

    $(i)$. Let $R$ be a graded $m$-nil clean ring with $p \in Nil(R)$ and $G$ be a locally finite $p$-group, where $p$ is prime and $p$ divides $m$. Then $RG$ is a graded $m$-nil clean ring.

    $(ii)$. Let $R$ be a graded ring such that $m$-potents and nilpotents of $R$ are all homogeneous. If $RG$ is a graded $m$-nil clean ring, then $R$ is a graded $m$-nil clean ring. \end{Theorem}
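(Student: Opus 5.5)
For part $(i)$ I will follow the pattern of Lemma~\ref{3.1}, which reduces the question to a nil ideal and then lifts. Fix a homogeneous $x \in (RG)_g$. Since $x$ involves only finitely many group elements and $G$ is locally finite, $x$ lies in $RH$ for some finite subgroup $H \le G$. The subgroup $H$ is a finite $p$-group, and $RH$ is a graded subring of $RG$. It therefore suffices to show that $x$ is graded $m$-nil clean inside $RH$, so from now on I take $G$ finite.

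Because $p$ is nilpotent in $R$ and $G$ is a finite $p$-group, \cite[Theorem 9]{c2} (as used in Lemma~\ref{3.1}) shows that the augmentation ideal $\Delta=\Delta(RG)$ is nilpotent. I will check that $\Delta$ is a homogeneous ideal for the grading $(RG)_g=\sum_h R_{gh^{-1}}h$. It is generated by the elements $r(h-1)$, and if $r\in R_{k}$ then $rh \in (RG)_{kh}$ while $r\cdot 1\in (RG)_k$. These degrees differ, so a more careful check is needed. Instead I will argue directly: write $x=\sum_h r_h h$ with $r_h\in R_{gh^{-1}}$. Its image under the augmentation map $\varepsilon:RG\to R$ is $\varepsilon(x)=\sum_h r_h$, which is generally not homogeneous in $R$. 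This is the main obstacle.

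My first attempt to get around it is a different decomposition. Since $p\mid m$ and $p$ is nilpotent, I will use the identity $(\sum_h r_h h)^{p^k}\equiv \sum_h r_h^{p^k} h^{p^k}$ modulo nilpotent commutator and $p$-multiple terms. Taking $p^k$ larger than the exponent of $G$, this gives $x^{p^k}\equiv\sum_h r_h^{p^k}$ modulo a nil ideal. Then, using that each homogeneous $r\in R$ is graded $m$-nil clean (so $r^m-r$ is nilpotent by \cite[Theorem 1.10]{mn} in the relevant component), I will aim to show that $x^m-x$ is nilpotent in $(RG)_g$. After that I will lift an $m$-potent, via Lemma~\ref{lp} applied to the nil ideal, or via the commutative criterion \cite[Theorem 1.10]{mn} if $R$ is commutative in the setting. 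The homogeneity of the lifted $m$-potent will be arranged by Remark~\ref{2.2}. If this route fails, the fallback is to invoke Theorem~\ref{q} exactly as in Lemma~\ref{3.1}, with $RG/\Delta\cong R$ as graded rings under the trivial-on-$G$ identification.

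For part $(ii)$, take a homogeneous $r\in R_g$. The natural embedding $R\to RG$, $r\mapsto r\cdot e$, sends $R_g$ into $(RG)_g$. Since $RG$ is graded $m$-nil clean, $r=f+n$ with $f$ an $m$-potent and $n$ a nilpotent of $RG$, both homogeneous of degree $g$. Applying the augmentation $\varepsilon$, which is a ring homomorphism fixing $R$, gives $r=\varepsilon(f)+\varepsilon(n)$. Here $\varepsilon(f)$ is an $m$-potent of $R$ and $\varepsilon(n)$ is a nilpotent of $R$. By hypothesis, $m$-potents and nilpotents of $R$ are homogeneous. By Remark~\ref{2.2} they may be taken in $R_g$, so $r$ is graded $m$-nil clean, and hence $R$ is graded $m$-nil clean.
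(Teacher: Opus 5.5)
\textbf{Part (i) has a genuine gap.} You are right that $\Delta(RG)$ is not homogeneous for the grading $(RG)_g=\sum_h R_{gh^{-1}}h$. For $e\neq h\in G$, the element $1-h$ lies in $\Delta(RG)$, but its homogeneous components $1\in(RG)_e$ and $-h\in(RG)_h$ do not. So $RG/\Delta(RG)$ carries no induced grading, and your fallback through Theorem~\ref{q} is unavailable.

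Your main route breaks at the step where the homogeneity of the lifted $m$-potent is ``arranged by Remark~\ref{2.2}''. That remark only relocates summands that are already homogeneous. Neither Lemma~\ref{lp} nor the commutative criterion produces a homogeneous lift. Moreover, $p\mid m$ forces $p\nmid m-1$, so every $p$-group is $(m-1)$-torsion free. By Corollary~\ref{2.41} the only homogeneous $m$-potent of degree $g\neq e$ is then $0$. Hence a homogeneous $x\in(RG)_g$ with $g\neq e$ is graded $m$-nil clean only if $x$ itself is nilpotent, and nilpotency of $x^m-x$ is far from enough.

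No argument can close this gap, because (i) is false for the grading fixed in the paper. For $e\neq h\in G$, the element $1\cdot h$ lies in $(RG)_h$ and satisfies $(1\cdot h)^k=1\cdot h^k$. So it is a unit of finite order, and by Proposition~\ref{2.5}(ii) $RG$ cannot be graded $m$-nil clean once $R\neq 0$ and $G\neq\{e\}$.

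Concretely, take $R=\mathbb{Z}_2$ with the trivial grading, $G=\{e,t\}$ of order $2$, and $m=p=2$. All hypotheses hold. The homogeneous elements of $RG$ are $0,1,t$; the only homogeneous idempotents are $0,1$, and the only homogeneous nilpotent is $0$. So $t$ is not graded nil clean, even though $t^2-t=1-t$ is nilpotent.

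The paper's proof only verifies $(m-1)$-torsion freeness and then invokes Lemma~\ref{3.1}. That lemma's proof applies Theorem~\ref{q} to $I=\Delta(RG)$, exactly the non-homogeneous ideal you flagged, so the paper's argument has the same hole. Your fallback would work for a grading in which $\Delta(RG)$ is homogeneous, such as $(RG)_g=R_gG$ on the ordinary group ring. That is not the grading used here.

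\textbf{Part (ii) is correct} and is a close variant of the paper's argument. The paper uses Proposition~\ref{2.5}(i) to see that $(RG)_e$ is $m$-nil clean. It then transports this to $R$ through the ring isomorphism $R\cong(RG)_e$, $\sum_g r_g\mapsto\sum_g r_gg^{-1}$, whose inverse is just the augmentation $\varepsilon$ restricted to $(RG)_e$. In effect the paper decomposes $rg^{-1}$ in degree $e$, while you decompose $r\cdot e$ in degree $g$ and then apply $\varepsilon$.

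You should state explicitly that $r\mapsto r\cdot e$ and $\varepsilon$ are ring homomorphisms for the twisted product. Both follow from $(r_kg')(s_lh')=r_ks_l(l^{-1}g'lh')$ with $r_k\in R_k$, $s_l\in R_l$.

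The paper's version gives the slightly stronger intermediate fact that all of $R$ is $m$-nil clean as an ungraded ring; yours is more self-contained. In both, the hypothesis that all $m$-potents and nilpotents of $R$ are homogeneous is indispensable, since $\varepsilon(f)$ need not be homogeneous.
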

\begin{proof}
$(i)$. Since $p$ is nilpotent and $p$ divides $m$, the element $m$ is nilpotent in $R$.
Therefore, $m-1$ is a unit in $R$. Assume, for the sake of contradiction, that there exists an element $g \in G$ such that $g^{\,m-1} = e$.
In this case, $p^{k}$ divides $m-1$ for some positive integer $k$, which would force $m-1$ to be nilpotent.
However, a nilpotent element cannot be invertible, contradicting the fact that $m-1$ is a unit in $R$. Hence the only possible solution is $g = e$ and thus $G$ is $(m-1)$-torsion free. The rest part then follows directly from Lemma~\ref{3.1}.

$(ii)$ Since $RG$ is a graded $m$-nil clean ring, it follows from Proposition~\ref{2.5} that $(RG)_e$ is an $m$-nil clean ring. By \cite[Proposition 2.1 (4)]{n}, the map $f : R \longrightarrow (RG)_e$ defined as $f(\displaystyle \sum_{g \in G}~r_g)=\displaystyle\sum_{g \in G}~r_gg^{-1}$, is a ring isomorphism. Consequently, $R$ is a $m$-nil clean ring and hence graded $m$-nil clean ring. \end{proof}


\begin{Theorem}
Let $G$ be a $(m-1)$-torsion free group for $m\geq 2$ and let $R=\displaystyle \bigoplus_{g \in G} R_g$ be a $G$-graded commutative ring of finite support such that $J(R) \subseteq J^g(R)$ and $(m-1) \in U(R)$. If $R$ is a graded $m$-nil clean ring, then $M_n(R)(\overline{\sigma})$ is also a graded $m$-nil clean ring, where $\overline{\sigma}=(e,e,\ldots,e) \in G^n$.\end{Theorem}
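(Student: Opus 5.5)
With $\overline{\sigma}=(e,\ldots,e)$, the homogeneous component of degree $\lambda$ is $M_n(R)_\lambda=M_n(R_\lambda)$. The plan is to treat the two degrees separately: show that $M_n(R_e)$ is $m$-nil clean, and show that every matrix in $M_n(R_\lambda)$ with $\lambda\neq e$ is nilpotent. Together these make every homogeneous element of $M_n(R)(\overline{\sigma})$ graded $m$-nil clean. The hypothesis $(m-1)\in U(R)$ passes to $M_n(R)$, since $(m-1)I_n$ is invertible.

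\textbf{Step 1 (degree $e$).} Since $R$ is graded $m$-nil clean, Proposition \ref{2.5}(i) gives that $R_e$ is an $m$-nil clean commutative ring. By \cite[Theorem 1.10]{mn}, $a^m-a$ is nilpotent for every $a\in R_e$. Hence $J(R_e)$ is nil, and $\overline{R_e}=R_e/J(R_e)$ is a reduced commutative ring satisfying $x^m=x$.

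I then want $M_n(R_e)$ to be $m$-nil clean. The cleanest route is the following.
\begin{itemize}
\item Reduce modulo the nil ideal $M_n(J(R_e))$. This ideal is nil because $J(R_e)$ is a nil ideal of a commutative ring, so any finitely generated subideal is nilpotent, and hence so is the corresponding matrix ideal.
\item By Lemma \ref{lp}, $m$-potents lift modulo this nil ideal. So it suffices to show that $M_n(\overline{R_e})$ is $m$-nil clean.
\item For this, use that $\overline{R_e}$ is a subdirect product of finite fields $\mathbb{F}_q$ with $(q-1)\mid(m-1)$. In particular, $\overline{R_e}$ has finite characteristic, with only finitely many primes occurring.
\item Then invoke the known result from \cite{mn} (or a direct argument via the rational canonical form over a finite field) that matrix rings over such rings are $m$-nil clean.
\end{itemize}

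\textbf{Step 2 (degree $\lambda\neq e$).} Let $A\in M_n(R_\lambda)$. By Proposition \ref{2.5}(ii), every entry of $A$ is nilpotent. Because $R$ is commutative, the entries of $A$ generate a nilpotent ideal $N$ of $R$, say $N^k=0$. Every entry of $A^k$ lies in $N^k=0$, so $A^k=0$ and $A$ is nilpotent. This step does not even need finite support. The hypotheses $\operatorname{supp}(R)<\infty$ and $J(R)\subseteq J^g(R)$ would come in if one instead argues through the quotient $R/J^g(R)$: one would show via \cite{c1} that $J^g(R)$ is nil, hence $J(R)$ is nil, hence $M_n(J(R))$ is a graded-nil ideal of $M_n(R)$, and then apply Theorem \ref{q} to $M_n(R)/M_n(J^g(R))\cong M_n(R/J^g(R))$.

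\textbf{Main obstacle.} The expected difficulty is Step 1: $m$-nil cleanness of $M_n(R_e)$ does not come formally from earlier results in the paper. If no citation covers it, I would first try to reduce to the case of a field. For $\overline{R_e}$ with $x^m=x$, I would write $M_n(\overline{R_e})$ as a direct limit of $M_n$ over finite products of finite fields. For a single $\mathbb{F}_q$, I would decompose any matrix as an $m$-potent plus a nilpotent using the Jordan–Chevalley decomposition after a finite field extension: the semisimple part $s$ is diagonalizable over $\mathbb{F}_{q^r}$. The delicate point is ensuring $s^m=s$, which requires the eigenvalues to lie in fields $\mathbb{F}_{q^r}$ with $(q^r-1)\mid(m-1)$. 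This may fail in general, and it is exactly where I expect the argument to need either an extra hypothesis or a non-commuting decomposition in the style of Breaz–Călugăreanu–Danchev–Micu for nil-clean matrices.
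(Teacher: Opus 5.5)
Your proof is correct, granting the same external matrix theorem the paper relies on, but it takes a different and more direct route.

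\textbf{The paper's route.} It passes to $\overline{R}=R/J^g(R)$, where every homogeneous element is $m$-potent. Corollary~\ref{2.41} then forces $\overline{R}=\overline{R}_e$, so $M_n(\overline{R})(\overline{\sigma})=M_n(\overline{R}_e)$. It applies \cite[Theorem 3.4]{mn} there and lifts back along $M_n(J^g(R))$ using Theorem~\ref{q}. The finite-support hypothesis and the Amitsur--Levitzki/PI citations are used to show that this ideal is graded-nil.

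\textbf{Your route.} You treat the components separately. For $\lambda\neq e$, your argument is complete and elementary: finitely many nilpotent entries in a commutative ring generate a nilpotent ideal $N$, so $A^k$ has entries in $N^k=0$. For $\lambda=e$ you need exactly that $M_n(R_e)$ is $m$-nil clean. The paper states this fact itself in the second paragraph of its proof, applying \cite[Theorem 3.4]{mn} directly to the commutative $m$-nil clean ring $R_e$. So your two steps already prove the theorem, and finite support and $J(R)\subseteq J^g(R)$ play no role in your argument.

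For the same reason, your reduction via Lemma~\ref{lp} modulo $M_n(J(R_e))$ is correct but not needed once you cite directly. Note that $R_e/J(R_e)=\overline{R}_e$ is the same ring the paper reaches, and $q$-potents are $m$-potents when $(q-1)\mid(m-1)$, so the finite-field reduction is harmless. The paper's quotient-and-lift structure could at most let one invoke the matrix theorem only over a ring with $x^m=x$. Since the paper also invokes it over $R_e$, that structure gains nothing over your route.

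\textbf{Do not use the Jordan--Chevalley fallback.} Your last paragraph's fallback cannot replace the citation. It produces a commuting decomposition, and matrices over finite fields are not strongly $m$-nil clean in general. For example, take $\mathbb{F}_3$ and $m=3$, with $A=\begin{pmatrix}0&-1\\1&0\end{pmatrix}$:
\begin{itemize}
\item $A$ is semisimple and $A^3=-A\neq A$.
\item Every $3$-potent over $\mathbb{F}_3$ is semisimple, because $x^3-x$ is separable.
\item So a commuting decomposition $A=E+N$ would be the unique Jordan--Chevalley decomposition, forcing $E=A$, which is impossible.
\item Yet $A=\begin{pmatrix}1&0\\0&-1\end{pmatrix}+\begin{pmatrix}-1&-1\\1&1\end{pmatrix}$ is a valid non-commuting $3$-nil clean decomposition.
\end{itemize}
So the missing ingredient is not an extra hypothesis. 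It is a genuinely non-commuting decomposition of matrices over $\mathbb{F}_q$; for $m=2$ this is the Breaz--C\u{a}lug\u{a}reanu--Danchev--Micu theorem. That is exactly what both you and the paper import from \cite[Theorem 3.4]{mn}, and it does not follow from the Jordan--Chevalley decomposition.
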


\begin{proof}
Since $R$ is commutative, every homogeneous nilpotent element necessarily lies in $J(R)$. Moreover by \cite[Lemma 12]{b}, the graded Jacobson radical $J^g(R)$ contains all homogeneous elements of $J(R)$. Thus it follows that $J^g(R)$ contains every homogeneous nilpotent element of $R$. Therefore, we have every homogeneous element of  $\overline{R}=R/J^g(R)$ is $m$-potent. Now consider any non zero homogeneous element $A \in M_n(\overline{R})(\overline{\sigma})$. Since the group $G$ is $(m-1)$ torsion free, we obtain that $A$ belongs to the subring $M_n(\overline{R}_e)$. Moreover as $\overline{R}_e$ is an $m$-nil clean ring, \cite[Theorem 3.4]{mn} ensures that $M_n(\overline{R}_e)$ is also an $m$-nil clean ring. Consequently, $M_n(\overline{R})(\overline{\sigma})$ is a graded $m$-nil clean ring.

Consider the natural homomorphism $f : M_n(R) \longrightarrow M_n(\overline{R})$ defined by $f([a_{ij}])=[\overline{a}_{ij}]$. This yields that $M_n(\overline{R})(\overline{\sigma}) \cong M_n(R)/M_n(J^g(R))$ and so $M_n(R)/M_n(J^g(R))$ is a graded $m$-nil clean ring. Moreover for any natural number $n$, we have $J^g(M_n(R))=M_n(J^g(R))$. Since $R_e$ is an $m$-nil clean ring, by \cite[Theorem 3.4]{mn}, it follows that
$M_n(R)_e(\overline{\sigma})=M_n(R_e)$ is an $m$-nil clean ring. This implies that $J(M_n(R)_e(\overline{\sigma}))$ is nil, by \cite[ Proposition 1.2.]{mn}. Also by the Amitsur–Levitski theorem (see \cite{a}), $M_n(R)$ is a $PI$-ring. Thus by \cite[Theorem 3]{k}, $J(M_n(R))$ is nil. Since $R$ has finite support, it follows from \cite[Corollary 2.9.4]{n2} that $J^g(M_n(R)) \subseteq J(M_n(R))$. Hence $J^g(M_n(R))$ is a graded nil ring. Therefore, by Theorem \ref{q} $M_n(R)(\overline{\sigma})$ is a graded $m$-nil clean ring.\end{proof}

Let $A$ be a ring and let $R=M_n(A)$.
For $1 \leq i,j \leq n$, we denote by $E_{ij}$ the matrix having $1$ in the $(i,j)$-th position and $0$ elsewhere.
It is well known that $R$ admits a natural $\mathbb{Z}$-grading defined as follows : for each $t \in \mathbb{Z}$,
\[
R_t =
\begin{cases}
0, & |t| \geq n, \\[6pt]
\displaystyle \sum_{i=1}^{\,n-t} A E_{i,i+t}  , & 0 \leq t < n, \\[10pt]
\displaystyle \sum_{i=-t+1}^{\,n} A E_{i,i+t}  , & -n < t < 0.
\end{cases}
\]
Thus $R$ decomposes into a $\mathbb{Z}$-graded structure with homogeneous components determined by the diagonals of the matrix.

\begin{Theorem}
    A ring $A$ is an $m$-nil clean ring if and only if the matrix ring $M_n(A)$, equipped with its natural $\mathbb{Z}$-grading, is a $\mathbb{Z}$-graded $m$-nil clean ring. \end{Theorem}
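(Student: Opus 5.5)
The plan is to reduce the statement to Proposition~\ref{2.5} and to a direct analysis of the components of the natural $\mathbb{Z}$-grading. First I identify the components. The identity component is $R_0=\sum_{i=1}^n AE_{ii}$, which is isomorphic to the direct product $A^n$ of $n$ copies of $A$. For $t\neq 0$, every element of $R_t$ is strictly upper or strictly lower triangular. Since $R_t=0$ for $|t|\ge n$, such an element $x$ satisfies $x^n\in R_{nt}=0$, so it is nilpotent.

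\emph{Forward direction.} Assume $A$ is $m$-nil clean. I treat the two kinds of homogeneous elements separately.
\begin{enumerate}[(a)]
\item A homogeneous element of nonzero degree is nilpotent, so it is graded $m$-nil clean with $m$-potent part $0$.
\item A homogeneous element of degree $0$ is a diagonal matrix $\operatorname{diag}(a_1,\dots,a_n)$. Write each $a_i=f_i+n_i$ with $f_i^m=f_i$ and $n_i$ nilpotent in $A$. Then $\operatorname{diag}(f_1,\dots,f_n)$ is an $m$-potent in $R_0$. Also $\operatorname{diag}(n_1,\dots,n_n)$ is nilpotent: it suffices to take the power equal to the maximum of the nilpotency indices of the $n_i$.
\end{enumerate}
Thus every homogeneous element is graded $m$-nil clean. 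Equivalently, $R_0\cong A^n$ is $m$-nil clean, since $m$-nil cleanness is preserved under finite products, in the spirit of Proposition~\ref{prop3.4}(ii).

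\emph{Converse.} Assume $M_n(A)$ is graded $m$-nil clean. By Proposition~\ref{2.5}(i), $R_0\cong A^n$ is an $m$-nil clean ring. The projection onto the first coordinate, $\operatorname{diag}(a_1,\dots,a_n)\mapsto a_1$, is a surjective ring homomorphism onto $A$. Homomorphic images of $m$-nil clean rings are $m$-nil clean, because images of $m$-potents and nilpotents are again $m$-potent and nilpotent. Hence $A$ is $m$-nil clean.

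The only step needing care is the nilpotency of the off-diagonal components, and it follows at once from the grading relation $R_tR_s\subseteq R_{t+s}$ together with the vanishing of $R_t$ for $|t|\ge n$. Note that $\mathbb{Z}$ is $(m-1)$-torsion free. Hence Remark~\ref{2.2} and Corollary~\ref{2.41} are consistent with this picture: the $m$-potent part of any decomposition must lie in $R_0$. No torsion-freeness argument is actually needed, however, since nilpotency is obtained directly.
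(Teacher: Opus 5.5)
Your proof is correct and follows essentially the same route as the paper: nonzero-degree homogeneous elements are nilpotent, degree-zero elements are decomposed entrywise along the diagonal, and the reverse implication uses that $R_0$ is $m$-nil clean by Proposition~\ref{2.5}(i). Your argument $x^n\in R_{nt}=0$ justifies the nilpotency step a little more carefully than the paper's appeal to strict triangularity, and your projection $R_0\cong A^n\to A$ formalizes the paper's step of reading off a diagonal entry of a decomposition of the scalar matrix $aI_n$.
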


\begin{proof} Let $R=M_n(A)$ be $\mathbb{Z}$-graded $m$-nil clean ring. For any element $a \in A$, the scalar matrix $aI_n \in R_0$, which is an $m$-nil clean ring. Therefore, $a$ is an $m$-nil clean element in $A$. Hence $A$ is an $m$-nil clean ring.

Conversely, consider an arbitrary homogeneous element of $M_n(A)$. If it lies in a graded component $R_t$ with $t \neq 0$, then the matrix is strictly upper or lower triangular with zero diagonal entries and hence nilpotent. If it lies in $R_0$, then it is a diagonal matrix whose diagonal entries come from $A$. Since $A$ is an $m$-nil clean ring, each such diagonal entry is $m$-nil clean and hence the entire matrix is homogeneous $m$-nil clean. Thus the result follows.
\end{proof}

\begin{Theorem}
  Let $G$ be a $(m-1)$-torsion free group for $m\geq 2$ and let $R=\displaystyle \bigoplus_{g \in G} R_g$ be a $G$-graded ring such that $(m-1) \in U(R)$. Then $R$ is a graded $m$-nil clean ring if and only if the triangular matrix ring $T_n(R)(\overline{\sigma})$ is a graded $m$-nil clean ring for every $\overline{\sigma} \in G^n$.
\end{Theorem}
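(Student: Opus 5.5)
The plan is to reduce everything to the identity component and the non-identity components, as in the proof of Theorem \ref{q}. Since $G$ is $(m-1)$-torsion free, Proposition \ref{2.5} and Corollary \ref{2.41} show that a $G$-graded ring $S$ is graded $m$-nil clean exactly when $S_e$ is $m$-nil clean and every homogeneous element of $S_g$ with $g\neq e$ is nilpotent. The forward direction of Proposition \ref{2.5} is already proved. For the converse, note that these two conditions suffice: an element of $S_g$, $g\neq e$, is $0+n$, and an element of $S_e$ decomposes inside $S_e$. I would verify both conditions for $T_n(R)(\overline{\sigma})$, and recover them for $R$ in the other direction.

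\emph{($\Leftarrow$)} Take $\overline{\sigma}=(e,\ldots,e)$. Then $T_n(R)(\overline{\sigma})_\lambda$ consists of upper triangular matrices with all entries in $R_\lambda$. The grading is graded-isomorphic to $R\times\cdots\times R$ modulo the graded ideal of strictly upper triangular matrices. Equivalently, the diagonal projection $T_n(R)(\overline{\sigma})\to\prod_{i=1}^n R$, $(a_{ij})\mapsto(a_{11},\ldots,a_{nn})$, is a surjective graded homomorphism. Hence $\prod R$ is graded $m$-nil clean by Proposition \ref{prop3.4}(i), and so is $R$ by Proposition \ref{prop3.4}(ii). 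Only the trivial $\overline{\sigma}$ is needed here.

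\emph{($\Rightarrow$)} Fix an arbitrary $\overline{\sigma}=(g_1,\ldots,g_n)$ and let $T=T_n(R)(\overline{\sigma})$. Let $N$ be the set of strictly upper triangular matrices in $T$.
\begin{enumerate}[(1)]
\item $N$ is a homogeneous two-sided ideal of $T$. It is nilpotent, since $N^n=0$, so in particular it is graded-nil.
\item The quotient $T/N$ is graded-isomorphic to $\prod_{i=1}^n R$ with the grading whose $\lambda$-component is $\prod_i R_{g_i\lambda g_i^{-1}}$. The diagonal entry of a matrix in $T_\lambda$ lies in $R_{g_i\lambda g_i^{-1}}$.
\item Each factor $R$ with the conjugated grading $R'_\lambda=R_{g_i\lambda g_i^{-1}}$ is again graded $m$-nil clean. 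It has the same homogeneous elements and identity component $R_e$, and conjugation $\lambda\mapsto g_i\lambda g_i^{-1}$ fixes $e$ and maps non-identity elements to non-identity elements. So the characterization above transfers directly.
\item By Proposition \ref{prop3.4}(ii), $T/N$ is graded $m$-nil clean.
\item Since $m-1$ is a unit in $R$, the scalar matrix $(m-1)I_n$ is a unit in $T$, so $(m-1)\in U(T)$. Theorem \ref{q} then gives that $T$ is graded $m$-nil clean.
\end{enumerate}

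The main obstacle I expect is making step (2) precise for a general $\overline{\sigma}$. One must check that $T_\lambda$ is really $\{(a_{ij})\in T_n(R): a_{ij}\in R_{g_i\lambda g_j^{-1}}\}$ with zeros below the diagonal, and that the direct sum decomposition and the multiplication rule $T_\lambda T_\mu\subseteq T_{\lambda\mu}$ hold. One must also confirm that $N$ is homogeneous and that the diagonal map is degree-preserving onto the conjugated gradings.

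A fallback that avoids quotients is a direct argument, which also makes the obstacle transparent:
\begin{itemize}
\item For $\lambda\neq e$, each diagonal entry of $A\in T_\lambda$ lies in $R_{g_i\lambda g_i^{-1}}$ with $g_i\lambda g_i^{-1}\neq e$, so it is nilpotent by Proposition \ref{2.5}(ii). An upper triangular matrix with nilpotent diagonal entries is nilpotent, so $A$ is nilpotent.
\item For $\lambda=e$, the diagonal of $A\in T_e$ lies in $R_e$, which is $m$-nil clean. Diagonal $m$-potents must then be lifted to $m$-potents of $T_e$, which Lemma \ref{lp} supplies using the nil ideal $N\cap T_e$ and $(m-1)\in U(T_e)$.
\end{itemize}
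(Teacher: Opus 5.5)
Your proof is correct and is essentially the paper's: quotient $T_n(R)(\overline{\sigma})$ by the nilpotent homogeneous ideal of strictly upper triangular matrices, identify the quotient with $R\times\cdots\times R$, and combine Theorem~\ref{q} with Proposition~\ref{prop3.4}. Your point that, for general $\overline{\sigma}=(g_1,\ldots,g_n)$, the $i$-th diagonal factor carries the conjugated grading $\lambda\mapsto R_{g_i\lambda g_i^{-1}}$ fills a gap the paper passes over; since this grading has the same homogeneous elements, the factor is graded $m$-nil clean exactly when $R$ is, and taking the converse via the diagonal projection is a harmless simplification.
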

\begin{proof}
Consider the ideal $I \subseteq T_n(R)(\overline{\sigma})$ consisting of all matrices whose principal diagonal entries are zero. Then by \cite[Theorem 3.20.]{i} it is a homogeneous nilpotent ideal and according to Theorem \ref{q}, $T_n(R)(\overline{\sigma})$ is a graded $m$-nil clean ring if and only if the quotient ring $T_n(R)(\overline{\sigma})/I$ is a graded $m$-nil clean ring. But the quotient ring $T_n(R)(\overline{\sigma})/I$ is isomorphic to $R \times R \times \cdots \times R$ ($n$ copies). Then by Proposition \ref{prop3.4} $(i)$,  $T_n(R)(\overline{\sigma})/I$ is a graded $m$-nil clean ring. Thus it follows that $T_n(R)(\overline{\sigma})$ is a graded $m$-nil clean ring if and only if $R$ is a graded $m$-nil clean ring. \end{proof}

\noindent
\textbf{\large Acknowledgment}

\noindent
The first author acknowledges the financial support received as a Senior Research Fellow (SRF) from the University Grants Commission (UGC), Government of India (Award Letter No. 211610081351).



\end{document}